\documentclass[11pt, oneside]{article}   	
\usepackage{geometry}                		
\usepackage{amsmath, amsthm}     
\usepackage{amsfonts} 
\geometry{letterpaper}  
\usepackage{amssymb}
\usepackage{graphicx}				
\newtheorem{thm}{Theorem}[section]

\newtheorem{lem}[thm]{Lemma}

\theoremstyle{remark}

\theoremstyle{definition}
\newtheorem{defi}[thm]{Definition}

\title{The Banach-Tarski Paradox}
\author{Katie Buchhorn}

\begin{document}
\maketitle
Supervised by Adam Sierakowski and David Robertson
\subsection*{Abstract}
In 1924, S. Banach and A. Tarski proved an astonishing, yet rather counterintuitive paradox: given a solid ball in $\mathbb{R}^3$, it is possible to partition it into finitely many pieces and reassemble them to form two solid balls, each identical in size to the first.

This paper was written for the fulfilment  of a \emph{summer scholarship} during an undergraduate degree at the University of Wollongong. The aim of the paper is to provide a comprehensive proof of the Banach-Tarski paradox thereby introducing the notions of paradoxical and equidecomposable sets which are phrased in terms of group actions. Once the reader has a firm grasp of these ideas, the proof of the Banach-Tarski Paradox is fairly straight forward, provided we have the Axiom of Choice at our disposal.

\section{Paradoxical Decompositions}

\begin{defi}
A group $G$ is a set together with a binary operation 
\begin{align*}
G \times G \rightarrow& G \\
(g,h) \mapsto& gh
\end{align*}
satisfying the following axioms:
\begin{itemize}
\item[(i)]Closure: if $a$, $b \in G$, then $a  b \in G$.
\item[(ii)]Identity: There is an identity element $e \in G$ such that $e  a = a = a e$, for every $a \in G$.
\item[(iii)]Inverse: There  must exist an inverse for each group element, 

i.e., for each $a \in G$, $\exists a^{-1} \in G$ such that $a  a^{-1} = e = a^{-1}  a$.

\item[(iv)]Associativity: for all $a,b,c \in G$, $(a  b )  c = a ( b  c )$.
\end{itemize}
\end{defi}

The operation with respect to which the group is formed is called the \emph{group} $operation$.

\begin{defi}
Let $G$ be a group and let $X$ be a set. Then a (left) group action of $G$ on $X$ is a function
\begin{equation*}
G \times X \rightarrow X,
\end{equation*}
\begin{equation*}
(g,x) \mapsto g \cdot x
\end{equation*}
that satisfies the following axioms:
\begin{itemize}
\item[(i)] Associativity: $(g h) \cdot x = g \cdot (h \cdot x)$ for all $g,h \in G$ and $x \in X$.

\item[(ii)]Identity: $e \cdot a = a $ for all $x \in X$ (where $e$ denotes the identity element in $G$).
\end{itemize}
\end{defi}

\begin{defi} $G$ be a group acting on a set $X$ and suppose $E \subseteq X$. $E$ is \emph{$G$-paradoxical} if for some positive integers $m,n$ there exists pairwise disjoint subsets $A_1, ... , A_n , B_1, ... , B_m$ of $E$ and $g_1, ... , g_n, h_1, ... , h_m \in G$ such that
\begin{equation*}
E = \bigcup _{i=1}^n g_i\cdot A_i  \text{ and } E = \bigcup _{i=1}^m h_i\cdot B_i 
\end{equation*}
\end{defi}

A good way to interpret this is to picture $E$ containing two disjoint subsets $\bigcup _{i=1}^n A_i$, $\bigcup _{i=1}^m B_i$ each of which can be broken down into their namesakes $A_i$ or $B_i$ and then moved around consecutively via elements of the group $G$ to cover all of $E$.

Note also that there is a stronger alternative; finding subsets $A_1, ... , A_n , B_1, ... , B_m$ of $E$ which in fact \emph{partition} $E$ (The only extra condition being that these subsets initially union to give $E$). This idea will be formally introduced later.

\subsection{Free Groups}
Free groups will be a recurring idea in the progression of the paper, so we will formally introduce the concept. If $S$ is a set, the free group generated by $S$  is the group of all reduced finite words with letters from $\{ s,s^{-1} \mid s\in S \}$. A word is called reduced if it contains no pairs of adjacent letters $ss^{-1}$ or $s^{-1}s$. The group composition is concatenation of words followed by reduction, that is, removing pairs of the mentioned forms. The rank of a group is the size of the generating set.

\begin{thm}
A free group $\mathbb{F}_2$ of rank 2 is $\mathbb{F}_2$-paradoxical, where $\mathbb{F}_2$ acts on itself by left multiplication.
\end{thm}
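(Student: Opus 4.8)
The plan is to work directly from the word structure of $\mathbb{F}_2$. I would write $a$ and $b$ for the two generators, so that every non-identity element is a unique reduced word in the letters $a, a^{-1}, b, b^{-1}$. For each such letter $x$, let $W(x)$ denote the set of all reduced words whose leftmost letter is $x$. Since a reduced word has a unique leftmost letter, the four sets $W(a), W(a^{-1}), W(b), W(b^{-1})$ are pairwise disjoint, and together with $\{e\}$ they partition the group:
\begin{equation*}
\mathbb{F}_2 = \{e\} \cup W(a) \cup W(a^{-1}) \cup W(b) \cup W(b^{-1}).
\end{equation*}
These four sets will serve as the subsets $A_1, A_2, B_1, B_2$ demanded by the definition of a paradoxical decomposition.

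The heart of the argument is the covering identity
\begin{equation*}
\mathbb{F}_2 = W(a) \cup a \cdot W(a^{-1}),
\end{equation*}
together with its mirror image $\mathbb{F}_2 = W(b) \cup b \cdot W(b^{-1})$. To establish the first, I would take an arbitrary reduced word $w$. If $w$ begins with $a$ then $w \in W(a)$ and there is nothing to do. Otherwise $w$ does not begin with $a$, so when I form $a^{-1} w$ no cancellation occurs at the junction, since cancellation would require $w$ to start with $a$. Hence $a^{-1} w$ is reduced and begins with $a^{-1}$, that is, $a^{-1} w \in W(a^{-1})$, and consequently $w = a \cdot (a^{-1} w) \in a \cdot W(a^{-1})$. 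This shows every element lies in $W(a) \cup a\, W(a^{-1})$, and the reverse inclusion is immediate.

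With these identities in hand, the paradoxical decomposition falls out by matching the definition with $n = m = 2$. I would take $A_1 = W(a)$, $A_2 = W(a^{-1})$ with $g_1 = e$, $g_2 = a$, and $B_1 = W(b)$, $B_2 = W(b^{-1})$ with $h_1 = e$, $h_2 = b$. The four sets are pairwise disjoint by the observation above, and the two covering identities give exactly $\mathbb{F}_2 = g_1 \cdot A_1 \cup g_2 \cdot A_2$ and $\mathbb{F}_2 = h_1 \cdot B_1 \cup h_2 \cdot B_2$, as required. It is worth stressing that the identity $e$ is deliberately left out of all four pieces; this is permitted, since the definition only asks that the \emph{translated} pieces cover $E$, not that the pieces themselves exhaust it.

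The step I expect to demand the most care is the covering identity, since its correctness rests entirely on the reduction mechanics of the free group, specifically on the fact that multiplying a word not beginning with $a$ by $a^{-1}$ triggers no cancellation, so that the result is genuinely reduced and genuinely begins with $a^{-1}$. This in turn leans on the uniqueness of the reduced form of each element, which is what makes ``the leftmost letter'' well defined in the first place.
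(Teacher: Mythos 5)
Your proposal is correct and takes essentially the same route as the paper's own proof: the same partition of $\mathbb{F}_2$ by leading letter (your $W(\cdot)$ is the paper's $\Psi(\cdot)$), the same no-cancellation argument, and the same covering identities $\mathbb{F}_2 = W(a) \cup a \cdot W(a^{-1})$ and $\mathbb{F}_2 = W(b) \cup b \cdot W(b^{-1})$. If anything, yours is slightly more explicit in matching the pieces and group elements ($g_1 = e$, $g_2 = a$, $h_1 = e$, $h_2 = b$) to the definition of a paradoxical decomposition.
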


\begin{proof}
Let $\mathbb{F}_2$ be the free group with the generating set $ \{ a, b \}$ and let $\rho \in \{ a, a^{-1}, b, b^{-1} \}$. Now let $ \Psi( \rho )$ denote the set of elements in $\mathbb{F}_2$ beginning with $\rho$. 
\newpage
\textbf{Figure 1}: The Cayley graph of the free group $\mathbb{F}_2$ on two generators $a$ and $b$

\includegraphics[scale=0.45]{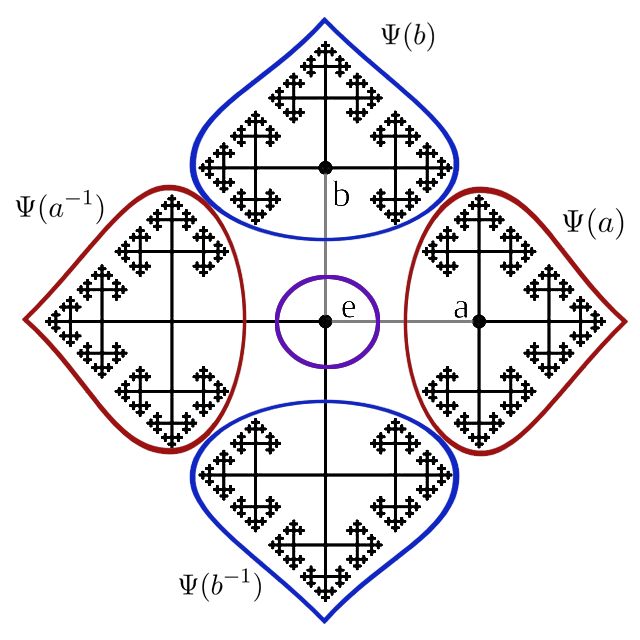}

It can be easily seen from Figure 1 that
\begin{equation}
\mathbb{F}_2 = \{ e \} \bigcup   \Psi( a )  \bigcup   \Psi( a^{-1}  )  \bigcup   \Psi( b )  \bigcup   \Psi( b^{-1}  ) 
\end{equation}
and that these subsets are pairwise disjoint.  We are aiming to find group elements of $\mathbb{F}_2$ that act on $\Psi( a )$ and  $\Psi( a^{-1}  )$ to rearrange the subsets in such a way that their union will give $\mathbb{F}_2$, and similarly for $\Psi( b )$ and  $\Psi( b^{-1}  )$.
\newpage
\textbf{Figure 2} The Cayley graph of the free group $\mathbb{F}_2$ and its subsets

\includegraphics[scale=0.45]{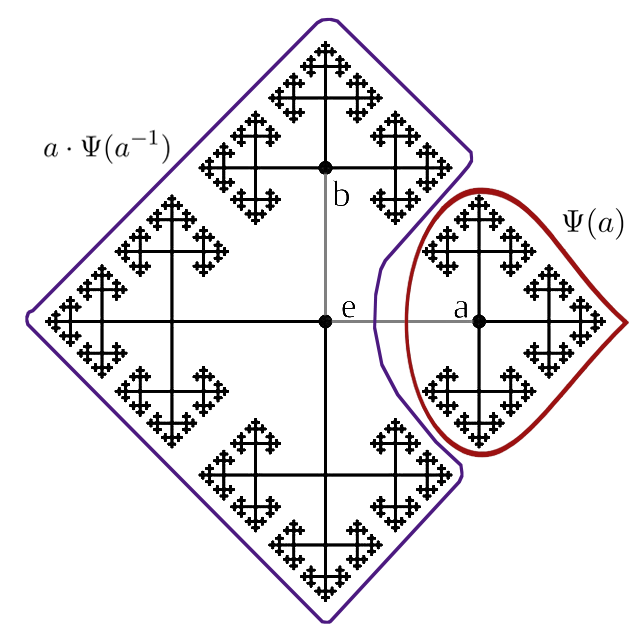}

For any element $ h \in F\setminus \Psi(a)$, this implies that $h \notin \Psi(a)$. So $h$ does not begin with the letter $a$. Then, if $a^{-1}$ acts on $h$ (on the left), there will be no cancellation hence $a^{-1} \cdot h \in \Psi( a^{-1}  )$.

Now,
\begin{equation*}
h = ( a \cdot a^{-1} ) \cdot h = a \cdot ( a^{-1} \cdot h) \in a \cdot \Psi( a^{-1}  )
  \end{equation*}

So for any element in $\mathbb{F}_2 \setminus \Psi(a)$, we can conclude that the element must in fact be in $a \cdot \Psi( a^{-1}  )$. Similarly, any element in $\mathbb{F}_2 \setminus \Psi(b)$ implies that the element is in  $b \cdot \Psi( b^{-1}  )$. So,

\begin{equation*}
a \cdot \Psi( a^{-1}  ) \cup \Psi( a ) = \mathbb{F}_2
 \text{ and }
b \cdot \Psi( b^{-1}  ) \cup \Psi( b ) = \mathbb{F}_2
\end{equation*}

Hence a free group $\mathbb{F}_2$ of rank 2 is $\mathbb{F}_2$-paradoxical.

\end{proof}

Notice that the following choice of subsets would have indeed \emph{partitioned} the free group $\mathbb{F}_2$:\\
$A_1 = \Psi( a^{-1}  ) \cup \{e\} \cup \{a\} \cup \{a^2\} \cup \{a^3\}...$ \\
$A_2 = \Psi(a) \setminus \{e\} \cup \{a\} \cup \{a^2\} \cup \{a^3\}...$\\
$B_1 = \Psi(b^{-1} )$\\
$B_2 = \Psi(b)$\\

with $a \cdot A_1 \cup A_2 = \mathbb{F}_2 $ and $b \cdot B_1 \cup B_2 = \mathbb{F}_2$, offering an alternative proof of Theorem 1.2.
	
\section{Equidecomposibility}
\begin{defi}
Let $G$ be a group acting on a set $X$, and let $A,B \subseteq X$. We say that $A$ and $B$ are \emph{$G$-congruent} if there exists $g \in G$ such that $g \cdot A = B$.
\end{defi}

\begin{defi}
Suppose $G$ acts on $X$ and $A,B	 \subseteq X$. $A$ and $B$ are \emph{$G$-equidecomposable} (denoted $A \sim_G B$) if $A$ and $B$ can each be partitioned into the same finite number of respectively $G$-congruent pieces. Formally written, $A \sim_G B$ if $A = \bigcup _{i=1}^n A_i$ ,	$B = \bigcup _{i=1}^n B_i$ with

\begin{equation*}
A_i \cap A_j = \emptyset = B_i \cap B_j \text{ for } i  < j \le n,
\end{equation*}
	
and there are $g_1, ... , g_n \in G$ such that, for each $i \le n$, $g_i \cdot A_i = B_i$.\\	
\end{defi}

In plain terms this definition means to say that if you can take one subset, $A$, and break it apart into $n$ pieces, rearrange those $n$ pieces via the group action of $G$ to form the $n$ pieces that union together to form another subset $B$, then $A$ and $B$ are $G$-equidecomposable.
	
\begin{lem}
Suppose $G$ acts on $X$ and $E \subseteq X$. If the set $E$ has two disjoint subsets $A$ and $B$ such that $A \sim_G E$ and $B \sim_G E$, then $E$ is $G$-paradoxical. 
\end{lem}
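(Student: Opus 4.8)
The plan is to prove this essentially by unwinding the two definitions and checking that the pieces supplied by the hypotheses match, verbatim, the definition of a $G$-paradoxical set. Since the desired conclusion is phrased in terms of pairwise disjoint subsets of $E$ together with group elements whose translates cover $E$, and the assumptions $A \sim_G E$ and $B \sim_G E$ hand us exactly such pieces and group elements, the work is almost entirely bookkeeping.

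First I would unpack $A \sim_G E$. By Definition 2.2 this yields a partition $A = \bigcup_{i=1}^n A_i$ into pairwise disjoint pieces, together with $g_1, \dots, g_n \in G$, such that the sets $g_i \cdot A_i$ form a partition of $E$; in particular $E = \bigcup_{i=1}^n g_i \cdot A_i$. Symmetrically, unpacking $B \sim_G E$ produces a partition $B = \bigcup_{j=1}^m B_j$ into pairwise disjoint pieces and elements $h_1, \dots, h_m \in G$ with $E = \bigcup_{j=1}^m h_j \cdot B_j$. At this stage I already possess two families of translates, each of which covers $E$.

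The crucial step, and the only place the hypothesis $A \cap B = \emptyset$ is used, is to verify that the combined collection $A_1, \dots, A_n, B_1, \dots, B_m$ consists of pairwise disjoint subsets of $E$. The $A_i$ are mutually disjoint because they partition $A$, and likewise the $B_j$ because they partition $B$; and since every $A_i \subseteq A$ while every $B_j \subseteq B$, the disjointness of $A$ and $B$ forces $A_i \cap B_j = \emptyset$ for all $i$ and $j$. All of these sets lie in $E$ because $A, B \subseteq E$.

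With pairwise disjointness established, the two coverings $E = \bigcup_{i=1}^n g_i \cdot A_i$ and $E = \bigcup_{j=1}^m h_j \cdot B_j$ are precisely the two equalities demanded by Definition 1.3, so $E$ is $G$-paradoxical. I expect no genuine obstacle: the sole subtlety is remembering to invoke $A \cap B = \emptyset$ in order to obtain disjointness \emph{across} the two families, and not merely within each one separately.
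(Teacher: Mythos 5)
Your proposal is correct and follows essentially the same route as the paper's own proof: unpack the two equidecomposabilities to obtain the partitions $A = \bigcup_{i=1}^n A_i$, $B = \bigcup_{j=1}^m B_j$ and the coverings $E = \bigcup_{i=1}^n g_i \cdot A_i$, $E = \bigcup_{j=1}^m h_j \cdot B_j$, then use $A \cap B = \emptyset$ to get pairwise disjointness of the combined family. If anything, your write-up is slightly more careful than the paper's, which leaves the cross-family disjointness step ($A_i \cap B_j = \emptyset$) implicit in a single sentence.
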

\newpage
\textbf{Figure 3} An illustration of Lemma 2.3

\includegraphics[scale=0.9]{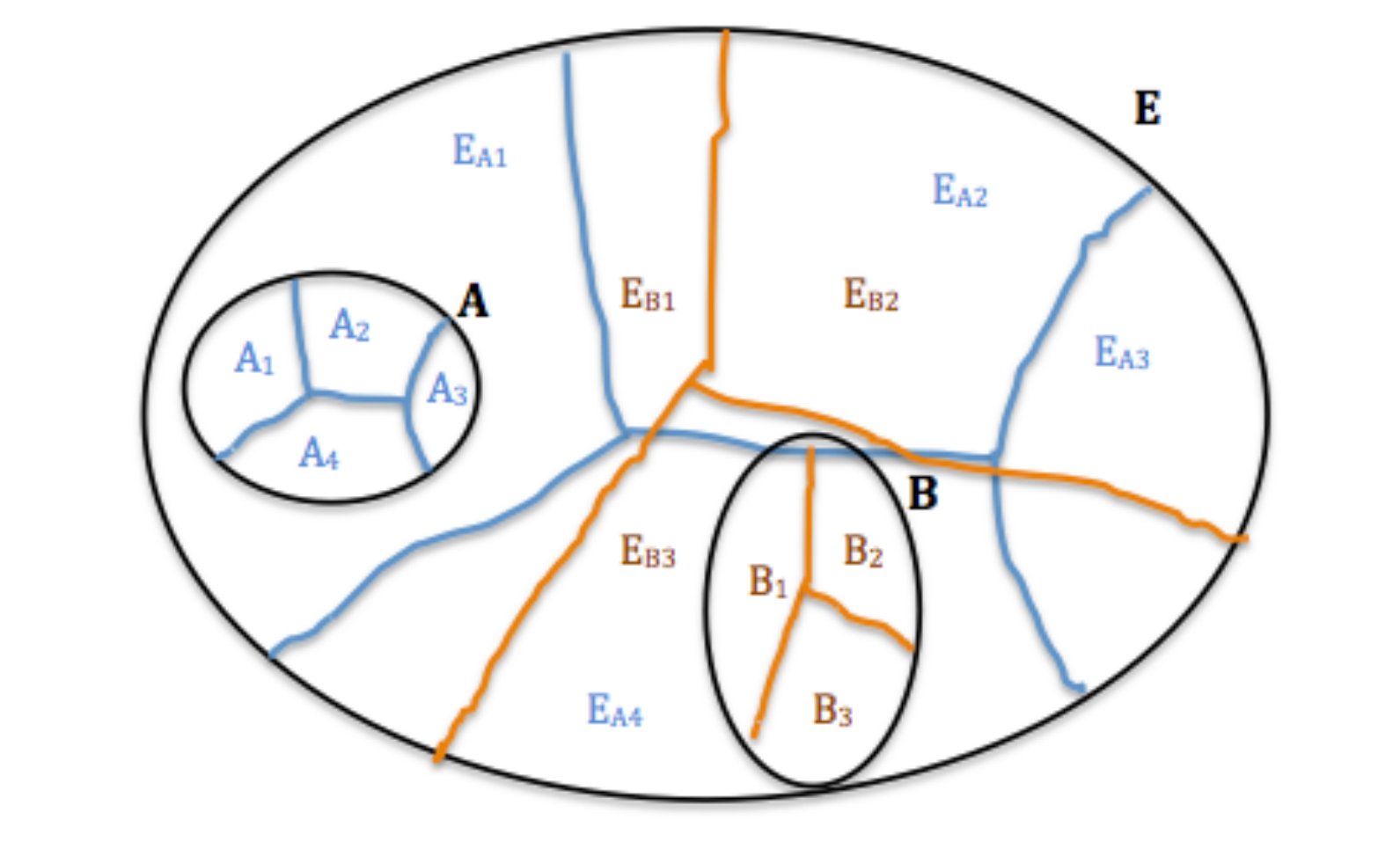}

\begin{proof} We know that, because $A \subseteq E$ and $E$ are $G$-equidecomposable, that $A$ and $E$ can each be partitioned into the same number of respectively $G$-congruent pieces, so
\begin{equation*}
A = \bigcup _{i=1}^n A_i \text{ and } E = \bigcup _{j=1}^n E_j \text{ for some $A_i \subseteq A$ and $E_j \subseteq E$}
\end{equation*}

\begin{equation*}
\text{and, for each $i \le n$, } g_i \cdot A_i = E_i \text{ with } g_i \in G
\end{equation*}
	
Similarly, for $B\subseteq E$ and $E$,
\begin{equation*}
B = \bigcup _{k=1}^m B_k \text{ and } E = \bigcup _{l=1}^m E_l \text{ for some $B_k \subseteq A$ and $E_l \subseteq E$}
\end{equation*}

\begin{equation*}
\text{and, for each $i \le m$, } h_i \cdot A_i = E_i\text{ with } h_i \in G
\end{equation*}

Therefore, 

\begin{equation*}
E = \bigcup _{i=1}^m E_i = \bigcup _{i=1}^m  h_i \cdot B_i \text{ and } E = \bigcup _{i=1}^n E_i = \bigcup _{i=1}^n  g_i \cdot A_i
\end{equation*}

where each $A_1, ..., A_n$ and each $B_1, ..., B_m$ are pairwise disjoint, and $A$ and $B$ are also disjoint, so we can conclude that subsets $A_1, ... , A_n , B_1, ... , B_m$ of $E$ are also pairwise disjoint. Hence $E$ is $G$-paradoxical.
\end{proof}


\section{Banach-Schroder-Bernstein Theorem}

This theorem is an interesting concept to visualise, and acts as a platform for further study on equidecomposibility relationships. A detailed proof has been included for the readers' ease.  

\begin{lem}
if $A \sim B$ then there is a bijection $g: A \rightarrow B$ such that $C \sim g(C)$ whenever $C \subseteq A$
\end{lem}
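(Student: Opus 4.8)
The plan is to build the bijection $g$ directly from the data witnessing $A \sim B$, defining it piecewise so that on each partition block it agrees with the group element relating that block to its image. The key background fact is that every element of $G$ acts on $X$ as a bijection: for fixed $g_i$ the map $x \mapsto g_i \cdot x$ has inverse $x \mapsto g_i^{-1} \cdot x$ by the action axioms. This is what makes the piecewise-defined map behave well.

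First I would unpack $A \sim B$ to obtain partitions $A = \bigcup_{i=1}^n A_i$ and $B = \bigcup_{i=1}^n B_i$ into pairwise disjoint blocks, together with $g_1, \dots, g_n \in G$ satisfying $g_i \cdot A_i = B_i$. Since the $A_i$ are disjoint and cover $A$, every $x \in A$ lies in exactly one block $A_i$, so I can define $g(x) = g_i \cdot x$ for $x \in A_i$ without ambiguity, and the map is well defined.

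Next I would verify that $g$ is a bijection from $A$ onto $B$. Surjectivity is immediate, since the image of each block is $g_i \cdot A_i = B_i$, whence $g(A) = \bigcup_{i=1}^n B_i = B$. For injectivity I would argue by cases on a supposed equality $g(x) = g(y)$: if $x$ and $y$ lie in the same block $A_i$, then $g_i \cdot x = g_i \cdot y$ forces $x = y$ because $g_i$ acts injectively; if they lie in distinct blocks $A_i$ and $A_j$, then $g(x) \in B_i$ and $g(y) \in B_j$ are elements of disjoint sets, so the equality cannot hold.

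Finally, for an arbitrary $C \subseteq A$ I would slice it along the existing partition by setting $C_i = C \cap A_i$. These are pairwise disjoint and union to $C$, and since $g$ restricted to $A_i$ is the congruence $g_i$, we get $g(C_i) = g_i \cdot C_i$, hence $g(C) = \bigcup_{i=1}^n g_i \cdot C_i$ with the images again pairwise disjoint as subsets of the disjoint $B_i$. This is exactly a witnessing decomposition for $C \sim g(C)$. I do not expect a serious obstacle here; the only points needing care are the injectivity step, where one must combine the injectivity of each $g_i$ with the disjointness of the $B_i$, and the minor bookkeeping that some slices $C_i$ may be empty, which is harmless since empty blocks can be discarded from the decomposition without affecting $G$-equidecomposability.
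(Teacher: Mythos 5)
Your proof is correct and complete. Note that the paper itself states this lemma \emph{without} proof --- it is used as a black box in the proof of the Banach--Schroder--Bernstein theorem --- so there is no argument in the paper to compare against; your piecewise construction $g(x) = g_i \cdot x$ for $x \in A_i$ is the standard one (it is essentially Wagon's argument in the cited reference), and your verification covers exactly the points that need care: well-definedness from disjointness of the $A_i$, injectivity by combining injectivity of each group element's action with disjointness of the $B_i$, and slicing an arbitrary $C \subseteq A$ along the given partition so that $C = \bigcup_{i=1}^n (C \cap A_i)$ and $g(C) = \bigcup_{i=1}^n g_i \cdot (C \cap A_i)$ witness $C \sim g(C)$.
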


\begin{lem}
if $A_1 \cap A_2 = \emptyset = B_1 \cap B_2$, and if $A_1 \sim B_1$ and $A_2 \sim B_2$, then $A_1 \cup A_2 \sim B_1 \cup B_2$.
\end{lem}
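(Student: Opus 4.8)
The plan is to unpack the two given equidecompositions into their defining partitions, concatenate the resulting pieces and group elements, and then verify that the concatenated data still constitutes a valid equidecomposition of $A_1 \cup A_2$ with $B_1 \cup B_2$.

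First I would invoke $A_1 \sim_G B_1$ to write $A_1 = \bigcup_{i=1}^n C_i$ and $B_1 = \bigcup_{i=1}^n D_i$, where the $C_i$ are pairwise disjoint, the $D_i$ are pairwise disjoint, and there are group elements $g_1, \ldots, g_n \in G$ with $g_i \cdot C_i = D_i$ for each $i \le n$. Likewise, $A_2 \sim_G B_2$ supplies $A_2 = \bigcup_{j=1}^m C'_j$ and $B_2 = \bigcup_{j=1}^m D'_j$ with the analogous disjointness and group elements $h_1, \ldots, h_m \in G$ satisfying $h_j \cdot C'_j = D'_j$.

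Next I would assemble the combined families $C_1, \ldots, C_n, C'_1, \ldots, C'_m$ and $D_1, \ldots, D_n, D'_1, \ldots, D'_m$, together with the concatenated list of group elements $g_1, \ldots, g_n, h_1, \ldots, h_m$. These families clearly union to $A_1 \cup A_2$ and $B_1 \cup B_2$ respectively, and by construction the appropriate group element carries each piece of the first family onto the corresponding piece of the second, so the congruence condition is immediate.

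The crux of the argument — and the only place where the hypotheses $A_1 \cap A_2 = \emptyset$ and $B_1 \cap B_2 = \emptyset$ are actually used — is checking that the combined families are genuinely pairwise disjoint, so that they form honest partitions of $n+m$ pieces. Within each original family the disjointness is inherited directly from the two given equidecompositions; the new cross terms $C_i \cap C'_j$ vanish because $C_i \subseteq A_1$ and $C'_j \subseteq A_2$ while $A_1 \cap A_2 = \emptyset$, and symmetrically $D_i \cap D'_j = \emptyset$ follows from $B_1 \cap B_2 = \emptyset$. Once this disjointness is confirmed, the concatenated data is precisely a partition of $A_1 \cup A_2$ into pieces that are respectively $G$-congruent to a partition of $B_1 \cup B_2$, which is exactly the definition of $A_1 \cup A_2 \sim_G B_1 \cup B_2$.
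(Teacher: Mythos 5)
Your proof is correct and complete: unpacking the two equidecompositions, concatenating the pieces and group elements, and checking that the cross terms $C_i \cap C'_j$ and $D_i \cap D'_j$ vanish precisely because $A_1 \cap A_2 = \emptyset = B_1 \cap B_2$ is exactly the right argument, and you correctly identify that disjointness check as the only place the hypotheses are needed. Note that the paper states this lemma without any proof at all, so there is nothing to compare against; your argument supplies the omitted proof that the paper implicitly relies on both in the Banach-Schroder-Bernstein theorem (combining $A \setminus C \sim B \setminus f(C)$ with $C \sim f(C)$) and later when showing $S_1 = P_1 M \cup D \sim \mathbb{F}_2 M \cup D$.
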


\begin{thm} [Banach-Schroder-Bernstein Theorem]
Suppose $G$ acts on $X$ and $A, B \subseteq X$. If $A$ is G-equidecomposable to a subset of $B$, and if $B$ is G-equidecomposable to a subset of $A$, then $A$ is G-equidecomposable to $B$.
\end{thm}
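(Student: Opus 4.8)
The plan is to mimic the classical Schröder–Bernstein argument, but carried out at the level of equidecomposability rather than cardinality. I am given that $A \sim_G A'$ for some $A' \subseteq B$ and $B \sim_G B'$ for some $B' \subseteq A$. By Lemma 3.1, the equidecomposition $A \sim_G A'$ supplies a bijection $f \colon A \to A'$ with the property that $C \sim_G f(C)$ for every $C \subseteq A$; likewise the equidecomposition $B \sim_G B'$ supplies a bijection $g \colon B \to B'$ with $D \sim_G g(D)$ for every $D \subseteq B$. These two ``partial'' bijections, each of which respects equidecomposability on subsets, are the engine of the whole proof.

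First I would restrict attention to $A$ and build a subset of $A$ by a fixed-point / Banach-decomposition construction. Define the map $h \colon \mathcal{P}(A) \to \mathcal{P}(A)$ by $h(S) = A \setminus g\bigl(B \setminus f(S)\bigr)$. Because $f$ and $g$ are bijections, $h$ is monotone with respect to inclusion, so by the Knaster--Tarski fixed-point theorem (or by explicitly taking $C = \bigcup_{n} h^n(\emptyset)$, the union of the iterates of $h$ applied to the empty set) there is a set $C \subseteq A$ with $h(C) = C$. Unwinding the definition, this fixed point satisfies $A \setminus C = g\bigl(B \setminus f(C)\bigr)$.

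Next I would assemble the two halves. On the piece $C$, the bijection $f$ witnesses $C \sim_G f(C)$. On the complementary piece $A \setminus C$, the fixed-point equation says $A \setminus C = g(B \setminus f(C))$, and since $g$ witnesses $D \sim_G g(D)$ for every $D \subseteq B$, applying this to $D = B \setminus f(C)$ gives $B \setminus f(C) \sim_G g(B \setminus f(C)) = A \setminus C$; equidecomposability is symmetric, so $A \setminus C \sim_G B \setminus f(C)$. Thus I have decomposed $A$ as the disjoint union $C \cup (A \setminus C)$ and $B$ as the disjoint union $f(C) \cup (B \setminus f(C))$, with $C \sim_G f(C)$ and $A \setminus C \sim_G B \setminus f(C)$. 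Since these two pairs sit in disjoint pieces, Lemma 3.2 lets me glue them: $A = C \cup (A \setminus C) \sim_G f(C) \cup (B \setminus f(C)) = B$, which is exactly the desired conclusion $A \sim_G B$.

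The main obstacle I anticipate is the fixed-point step and the bookkeeping of which bijection acts where. The construction of $C$ must be set up so that $f$ handles $C$ and $g$ handles the complement, and one has to be careful that $f(C)$ and $B \setminus f(C)$ genuinely partition $B$ while $C$ and $A \setminus C$ partition $A$ — this is where disjointness, needed to invoke Lemma 3.2, comes from. Establishing monotonicity of $h$ and hence existence of the fixed point is routine once the $f, g$ from Lemma 3.1 are in hand, so the real conceptual work is confined to correctly formulating $h$ and verifying the two equidecomposabilities that Lemma 3.2 then combines.
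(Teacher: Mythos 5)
Your proposal is correct and follows essentially the same route as the paper: both decompose $A$ into $C$ and $A \setminus C$ using the Lemma 3.1 bijections, establish $C \sim_G f(C)$ and $A \setminus C \sim_G B \setminus f(C)$, and glue the two pieces with Lemma 3.2. The only difference is presentational: you obtain $C$ as a Knaster--Tarski fixed point of the monotone map $h(S) = A \setminus g\bigl(B \setminus f(S)\bigr)$, whereas the paper builds the same set explicitly as $C = \bigcup_{n} C_n$ with $C_0 = A \setminus A_1$ and $C_{n+1} = g^{-1}f(C_n)$ --- which is exactly your alternative $\bigcup_n h^n(\emptyset)$, so the two constructions coincide.
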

\begin{proof}
Denote $B_1 \subseteq B_2$ and $A_1 \subseteq A_2$. Define $f: A \rightarrow B_1$ and $g: A_1 \rightarrow B$ as bijections guaranteed by Lemma 3.1 as $A \sim B_1$ and $B \sim A_1$.\\

Let $C_0 = A \setminus A_1$ and define inductively $C_{n+1} = g^{-1}f(C_n)$. Now, let $C= \bigcup^{\infty}_{n=0} C_n$. We claim that 
\begin{equation}
g(A \setminus C) = B \setminus f(C)
\end{equation} To see this, consider $x \in A_1$,
\begin{align*}
g(x) \in B \setminus f(C) &\Leftrightarrow g(x) \notin f(C)\\
&\Leftrightarrow g(x) \notin f(\bigcup^{\infty}_{n=0} C_n)\\
&\Leftrightarrow g(x) \notin \bigcup^{\infty}_{n=0} f(C_n)\\
&\Leftrightarrow g(x) \notin f(C_n) \text{ for all } n\ge 0\\
&\Leftrightarrow g^{-1}g(x) \notin g^{-1}f(C_n) \text{ for all } n\ge 0\\
&\Leftrightarrow x \notin C_{n+1} \text{ for all } n\ge 0\\
&\Leftrightarrow x \notin \bigcup^{\infty}_{n=0} C_{n+1}\\
&\Leftrightarrow x \notin \bigcup^{\infty}_{n=1} C_{n}\\
&\Leftrightarrow x \notin C \text{ by } \ast \\
&\Leftrightarrow x \in A \setminus C\\
&\Leftrightarrow g(x) \in g(A \setminus C)
\end{align*}
$\ast$ provided $x \notin C_0 \Leftrightarrow x \notin  A \setminus A_1$. Which is true because $x \in A_1$. \\

So our claim is true. We now wish to show that $g(A \setminus C) \sim A \setminus C$. We can use Lemma 3.1 but first we need to convince ourselves that $A \setminus C \subseteq A_1$.
\begin{align*}
C = \bigcup^{\infty}_{n=0} C_n &\Rightarrow C_0 \subseteq C\\
&\Leftrightarrow A \setminus C \subseteq A \setminus C_0 = A_1
\end{align*}

Indeed, $A \setminus C \sim g(A \setminus C) = B \setminus f(C)$* by equation (2). We know $C \subseteq A$, so by Lemma 3.1 we have $C \sim g(C)$**. 

The fact that $(A \setminus C) \cap C = \emptyset = (B \setminus f(C)) \cap f(C)$ with (*) and (**) allows us to utilise Lemma 3.2, so $(A \setminus C) \cup C \sim (B \setminus f(C)) \cup f(C) \Rightarrow A \sim B$.

\end{proof}

\begin{thm}
$\mathbb{N}\cup\{0\}$ and $\mathbb{Z} $ are G-equidecomposable, where G is the group of all bijections on $\mathbb{Q} $.
\end{thm}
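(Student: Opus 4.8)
The plan is to exploit the fact that $G$ here is the group of \emph{all} bijections of $\mathbb{Q}$, which is so permissive that a single group element already suffices: I will show that $\mathbb{N} \cup \{0\}$ and $\mathbb{Z}$ are $G$-congruent, i.e.\ that there is one $g \in G$ with $g \cdot (\mathbb{N} \cup \{0\}) = \mathbb{Z}$. Taking $n = 1$ in the definition of $G$-equidecomposability, with $A_1 = \mathbb{N} \cup \{0\}$, $B_1 = \mathbb{Z}$ and $g_1 = g$, then immediately yields $\mathbb{N} \cup \{0\} \sim_G \mathbb{Z}$, since a one-piece partition satisfies the pairwise-disjointness conditions vacuously.

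First I would note that $\mathbb{N} \cup \{0\}$ and $\mathbb{Z}$ are both countably infinite, so there is a bijection $\phi \colon \mathbb{N} \cup \{0\} \to \mathbb{Z}$ between them; for instance the interleaving $0 \mapsto 0$, $2k - 1 \mapsto k$ and $2k \mapsto -k$ for $k \ge 1$. The real work, however, is to promote $\phi$ to a bijection of the whole of $\mathbb{Q}$, since $G$ consists of bijections of all of $\mathbb{Q}$ and not merely of maps between the two countable sets. For this I would verify that the complements $\mathbb{Q} \setminus (\mathbb{N} \cup \{0\})$ and $\mathbb{Q} \setminus \mathbb{Z}$ are themselves both countably infinite: each is an infinite subset of the countable set $\mathbb{Q}$, being infinite because each already contains every non-integer rational. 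Hence there is a bijection $\psi \colon \mathbb{Q} \setminus (\mathbb{N} \cup \{0\}) \to \mathbb{Q} \setminus \mathbb{Z}$.

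I would then define $g \colon \mathbb{Q} \to \mathbb{Q}$ piecewise by $g = \phi$ on $\mathbb{N} \cup \{0\}$ and $g = \psi$ on $\mathbb{Q} \setminus (\mathbb{N} \cup \{0\})$. Because $\phi$ maps onto $\mathbb{Z}$ and $\psi$ maps onto $\mathbb{Q} \setminus \mathbb{Z}$, and because these two images partition $\mathbb{Q}$, the map $g$ is a well-defined bijection of $\mathbb{Q}$, so $g \in G$. By construction $g \cdot (\mathbb{N} \cup \{0\}) = \mathbb{Z}$, giving the required $G$-congruence and hence the $G$-equidecomposability.

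The content of the argument is almost entirely cardinality bookkeeping, and the one step that genuinely needs care is the extension. One cannot simply assert a bijection between the two countable sets and declare it an element of $G$; it is the verification that the two complements are equinumerous (both countably infinite) that licenses gluing $\phi$ and $\psi$ into a global bijection of $\mathbb{Q}$. This is the only place where the argument could fail, and it is where I would be most careful.
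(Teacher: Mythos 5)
Your proof is correct, but it takes a genuinely different route from the paper. The paper exhibits an explicit two-piece decomposition: it splits $\mathbb{N}\cup\{0\}$ into evens and odds, splits $\mathbb{Z}$ into the nonnegative and negative integers, and matches the pieces by the affine maps $x \mapsto \frac{x}{2}$ and $x \mapsto -\frac{x+1}{2}$. These maps are already bijections of all of $\mathbb{Q}$, so no extension or cardinality argument is needed; membership in $G$ is immediate. You instead prove the stronger statement that the two sets are $G$-\emph{congruent}, i.e.\ equidecomposable with a single piece, by building one global bijection of $\mathbb{Q}$: an interleaving bijection $\phi$ on $\mathbb{N}\cup\{0\}$ glued to an abstract bijection $\psi$ between the complements, whose existence you justify (correctly) by noting both complements are countably infinite. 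What each approach buys: yours makes transparent just how permissive the group of all bijections of $\mathbb{Q}$ is --- any two subsets whose complements are equinumerous are already congruent, so equidecomposability here is nearly trivial; the paper's version sacrifices that stronger conclusion but gives completely explicit group elements and, more importantly for the narrative, exercises the actual definition of a multi-piece $G$-equidecomposition, which is the notion the rest of the paper needs when the acting group (rotations) is too rigid for single-piece congruences. Your one point of care --- that gluing requires the complements to be equinumerous, not merely the sets themselves --- is exactly the right thing to flag, and your verification of it is sound.
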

\begin{proof}
Let $N_1$ be the set of even numbers (including zero), and $N_2$ be the set of odd numbers. So
\begin{equation*}
N_1 \cup N_2 = \{ 0, 2, 4, 6, 8 ....\} \cup \{1, 3, 5, 7, 9 ....\} = \mathbb{N} \cup \{ 0 \}
\end{equation*}
Let $Z_1$ be the set of positive integers (including zero) and $Z_2$ be the set of odd integers. So
\begin{equation*}
Z_1 \cup Z_2 = \{ 0, 1, 2, 3, 4 ....\} \cup \{-1, -2, -3, -4 ....\} = \mathbb{Z}
\end{equation*}
and $N_1 \cap N_2 = \emptyset = Z_1 \cap Z_2$.\\

Define the bijection $g_1: N_1 \rightarrow Z_1$ as
\begin{align*}
g_1  x &= \frac{x}{2}\\
\therefore g_1 \cdot N_1 &= Z_1, \text{ where } g_1 \in G
\end{align*}

Define the bijection $g_2: N_2 \rightarrow Z_2$ as
\begin{align*}
g_2  x &= -\frac{x + 1}{2}\\
\therefore g_2 \cdot N_2 &= Z_2, \text{ where } g_2 \in G
\end{align*}

So $\mathbb{N}\cup\{0\}$ and $\mathbb{Z} $ each can be partitioned into two respectively $G$-congruent pieces, hence $(\mathbb{N}\cup\{0\}) \sim \mathbb{Z} $
\end{proof}

\begin{thm}
$S^1$ is $SO_2$-equidecomposable to $S^1 \setminus \{ 1 \}$, where $SO_2$ = group of rotations  in $\mathbb{R}^2$, and $S^1 = \{ (x_1, x_2) \in \mathbb{R}^2 : x_1^2 + x_2^2 = 1\}$ the circle radius 1 centred about the origin.
\end{thm}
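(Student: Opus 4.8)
The plan is to absorb the missing point into an infinite orbit, exactly the Hilbert-hotel style trick that drives Theorem 3.3 but now realised inside the rotation group. The key is to pick a rotation of \emph{infinite order}, so that repeatedly rotating a chosen starting point never returns to it; the single deleted point can then be shifted ``one step down'' the orbit while leaving the rest of the circle fixed.

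Concretely, first I would fix an angle $\theta$ that is an irrational multiple of $2\pi$ and let $\rho \in SO_2$ be the rotation by $\theta$ about the origin. Writing $p = (1,0)$ for the distinguished point $1 \in S^1$, I would then form its forward orbit
\[
D = \{\, \rho^n \cdot p : n \ge 0 \,\} = \{\, p,\ \rho\cdot p,\ \rho^2\cdot p,\ \ldots \,\}.
\]
The crucial claim is that these points are pairwise distinct, so that $D$ is genuinely infinite and $\rho$ acts on it like the shift $n \mapsto n+1$. To prove it I would suppose $\rho^n \cdot p = \rho^m \cdot p$ with $n > m \ge 0$; applying $\rho^{-m}$ gives $\rho^{\,n-m} \cdot p = p$, so rotation by $(n-m)\theta$ fixes $p$ and hence $(n-m)\theta$ is an integer multiple of $2\pi$, forcing $\theta/2\pi \in \mathbb{Q}$ and contradicting the choice of $\theta$.

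With distinctness in hand, the decomposition is immediate. I would partition the two sets as
\[
S^1 = D \cup (S^1 \setminus D), \qquad S^1 \setminus \{p\} = (D \setminus \{p\}) \cup (S^1 \setminus D),
\]
both unions being disjoint. Since the orbit points are distinct, $\rho \cdot D = \{\, \rho^{n+1}\cdot p : n \ge 0 \,\} = D \setminus \{p\}$, so the first pieces are $SO_2$-congruent via $\rho$; the leftover pieces $S^1 \setminus D$ coincide and are congruent via the identity $e \in SO_2$. Taking $n = 2$, $A_1 = D$, $A_2 = S^1 \setminus D$, $g_1 = \rho$ and $g_2 = e$ then matches Definition 2.2 exactly, giving $S^1 \sim_{SO_2} S^1 \setminus \{1\}$.

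The only real obstacle is establishing the injectivity of the orbit, which is precisely where the irrationality of $\theta/2\pi$ is indispensable: a rational angle would make $\rho$ of finite order, the orbit $D$ would be a finite cyclic set, and the shift $\rho \cdot D = D \setminus \{p\}$ would fail because $\rho$ would eventually map some orbit point back onto $p$. Everything after that claim is routine bookkeeping with disjoint unions, so I would spend the care on justifying the distinctness and on noting explicitly that $\rho, e \in SO_2$ so the congruences live in the correct group.
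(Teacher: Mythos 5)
Your proposal is correct and takes essentially the same approach as the paper: both form the forward orbit $D$ of the deleted point under an infinite-order rotation, map $D$ one step along the orbit by that rotation, and fix the complement $S^1 \setminus D$ with the identity. The only differences are cosmetic---the paper uses the specific rotation by $1$ radian (infinite order since $2\pi$ is irrational) where you take a general rotation by an irrational multiple of $2\pi$, and you prove the pairwise distinctness of the orbit points explicitly, a step the paper merely asserts.
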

\begin{proof}
Let $\theta$ be a counterclockwise rotation by, say, 1 radians around the origin. Since $2\pi$ is irrational we see that $\theta^n \cdot 1$ never comes back to coincide with $1$ for all $n \in \mathbb{N}.$

Let
\begin{align*}
A_1 &= \{ 1, \theta \cdot 1, \theta^2 \cdot 1, \theta^3 \cdot 1, ....\} \text{ and }A_2 = S^1 \setminus A_1
\end{align*}
so that $A_1 \cup A_2 = S^1$ and $A_1 \cap A_2 = \emptyset$ 

Let
\begin{align*}
B_1 &= \theta \cdot A_1 = \{\theta \cdot 1, \theta^2 \cdot 1, \theta^3 \cdot 1, ....\}\\
 \text{ and } B_2 &= 1 \cdot A_2 = S^1 \setminus A_1 ,\text{where 1 is the identity element in }SO_2
\end{align*}
so that $B_1 \cup B_2 = S^1\setminus \{1\}$ and $B_1 \cap B_2 = \emptyset$ 

\end{proof}
This theorem illustrates how you can take a circle, partition it into no more than two pieces, rearrange one piece via the group of rotations to give you two pieces of that original circle only now with a point missing. The choice of rotation was paramount here, ensuring that no multiple of $\theta$ will be the identity rotation and hence fill the hole in the circle at 1.
\begin{thm}
$B^3$ is $SO_3$-equidecomposable with $B^3 \setminus \{ 0 \} $, where $SO_3$ is the group of rotations in $ \mathbb{R}^3$
\end{thm}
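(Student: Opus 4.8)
The plan is to mimic the proof of Theorem 3.4, where a single rotation absorbed one missing point on the circle. Here the missing point is the centre $0$, and the role of the circle will be played by a small circle inside the ball passing through $0$. First I would fix a rotation $\rho \in SO_3$ about an axis $\ell$ chosen so that $\ell$ does \emph{not} pass through the origin but lies close to it, and so that $\rho$ rotates by an angle that is not a rational multiple of $2\pi$ (for instance $1$ radian, exactly as in Theorem 3.4). Because $\ell$ avoids $0$, the point $0$ genuinely moves under $\rho$, and its orbit $\{0,\ \rho\cdot 0,\ \rho^2\cdot 0,\ \dots\}$ is the circle swept out by $0$ about $\ell$. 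By taking $\ell$ near enough to $0$, this circle has radius less than $1$, so the entire orbit stays inside $B^3$.

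Next, exactly as in Theorem 3.4, the irrationality of the angle guarantees that $\rho^n\cdot 0$ never returns to $0$, so the points $0,\ \rho\cdot 0,\ \rho^2\cdot 0,\ \dots$ are pairwise distinct. I would then set
\begin{equation*}
A_1 = \{0,\ \rho\cdot 0,\ \rho^2\cdot 0,\ \dots\} \text{ and } A_2 = B^3 \setminus A_1,
\end{equation*}
so that $A_1 \cup A_2 = B^3$ and $A_1 \cap A_2 = \emptyset$. Applying $\rho$ to $A_1$ shifts the orbit one step, giving $\rho\cdot A_1 = A_1 \setminus \{0\}$, while the identity fixes $A_2$. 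Writing $B_1 = \rho\cdot A_1$ and $B_2 = A_2$, these are disjoint and $B_1 \cup B_2 = (A_1\setminus\{0\}) \cup A_2 = B^3 \setminus \{0\}$. Hence $B^3$ and $B^3\setminus\{0\}$ are each partitioned into two respectively $SO_3$-congruent pieces, witnessed by $g_1 = \rho$ and $g_2 = e$, so $B^3 \sim_{SO_3} B^3\setminus\{0\}$.

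The step I expect to be the main obstacle, and indeed the whole crux of the argument, is the choice of $\rho$. Unlike Theorem 3.4, where the missing point lay on the circle away from the fixed centre, here the missing point \emph{is} the centre: a rotation whose axis passes through $0$ would fix $0$ and could never displace it, and one checks that the piece containing $0$ would then always map to a piece still containing $0$, which is incompatible with covering $B^3\setminus\{0\}$. The essential idea is therefore to rotate about an axis that misses the origin, yet stays close enough that the swept orbit of $0$ remains within the unit ball. Verifying that an axis and angle exist satisfying both conditions simultaneously (angle an irrational multiple of $2\pi$, orbit radius strictly less than $1$) is the only real content; the remainder is a verbatim copy of the absorption trick already used for the circle.
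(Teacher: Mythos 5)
Your proof is correct, and it is a genuinely different decomposition from the paper's. The paper does not apply the absorption trick directly to the orbit of $0$: it fixes a circle $C^1$ inside $B^3$ passing through the origin, cites Theorem 3.5 to get $C^1 \sim C^1 \setminus \{0\}$, and adjoins the remaining piece $B^3 \setminus C^1$ carried by the identity --- a decomposition with three (or more) pieces whose real content is delegated to the earlier theorem. You instead rotate the origin itself about a nearby axis $\ell$ missing $0$ and absorb the countable orbit $\{0,\ \rho\cdot 0,\ \rho^2\cdot 0,\ \ldots\}$, getting a two-piece decomposition with no appeal to Theorem 3.5. What the paper's route buys is modularity: the circle lemma is reused as a black box, in keeping with the paper's overall strategy of lifting lower-dimensional results upward. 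What your route buys is economy (two pieces instead of three) and, more importantly, it brings into the open the point the paper hides: the rotation doing the work cannot have its axis through the centre of the ball, since it must move $0$. The paper has exactly the same issue --- the rotations supplied by Theorem 3.5 are rotations about the centre of $C^1$, which is not the origin of $B^3$ --- but buries it in the unjustified line ``$SO_2 \subseteq SO_3$''; under a strict reading of $SO_3$ as rotations fixing the origin, neither proof stays inside $SO_3$, and both need the paper's loose definition of $SO_3$ as ``the group of rotations in $\mathbb{R}^3$'' to include rotations about arbitrary axes. Two harmless slips in your write-up: the orbit of $0$ is only a countable subset of the circle it sweeps, not the whole circle; and ``radius less than $1$'' is not the right condition for staying inside $B^3$ --- the farthest point of that circle lies at distance $2\,d(0,\ell)$ from the origin, so you want $d(0,\ell) < 1/2$, which your ``near enough to $0$'' already ensures.
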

This theorem describes how a solid ball can be divided into pieces and rearranged to give the same solid ball minus a single point.
\begin{proof}
Let $C^1$ be a circle passing through the origin contained in $B^3$. Let $A_0 = B^3 \setminus C^1$ and $A = C^1$ so that $A_0$, $A$ partition $ B^3$. Let $B_0 =A_0 =B^3 \setminus C^1$ and $B= C^1 \setminus \{0\}$ so $B_0$, $B$ partition $B^3 \setminus \{0\}$. 

Theorem 3.5 gives us that $A$ and $B$ are $SO_2$-equidecomposable. Hence $A$ and $B$ can be partitioned into a finite number, say $n$, of $SO_2$-congruent pieces. i.e, 
\begin{align*}
A &= \bigcup^n_{i=1} A_i \text{ and } B = \bigcup^n_{i=1} B_i &\\
B_j &= h_j \cdot A_j &\text{for } 0 < j \le n, h_j \in SO_2
\end{align*}
Since $SO_2 \subseteq SO_3$, $h_j \in SO_2 \Rightarrow h_j \in SO_3$ for $0 < j \le n$. $B_0 = 1 \cdot A_0$, where 1 is the identity element in $SO_3$. Therefore,
\begin{align*}
B^3 = A_0 \cup (\bigcup^n_{i=1} A_i)  =\bigcup^n_{i=0} A_i
\text { and }
B^3 \setminus \{0\} &= B_0 \cup (\bigcup^n_{i=1} B_i ) = \bigcup^n_{i=0} B_i
\end{align*}
and
\begin{align*}
B_j &= h_j \cdot A_j &\text{for } 0 \le j \le n, h_j \in SO_3
\end{align*}

\end{proof}
It can now be seen that a solid ball in $\mathbb{R}^3$ can be partitioned into two pieces; one a ring passing through the origin, and the other being the remainder of the ball. The ring, as seen before, is equidecomposable to the ring minus a point. So using that phenomenon on a ring, as a piece of the ball, yields the same result in a higher dimension.

\section{The importance of independence}
A set $S$ of elements in a group are called \emph{independent} if no non-trivial, reduced words using letters from $S$ and their inverses is the identity. Hence, a pair of independent elements will generate a free subgroup or rank 2.

\begin{thm}
There are two independent rotations about the axis through the origin in $\mathbb{R}^3$. Hence, $SO_3$ contains a free subgroup of rank 2. 
\end{thm}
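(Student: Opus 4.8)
The plan is to prove the statement constructively: I would exhibit two explicit rotations and verify that they are \emph{independent} in the sense of Section 4, after which the ``Hence'' is immediate, since a pair of independent elements generates a free subgroup of rank $2$ by the very definition just given (the assignment sending the two free generators of $\mathbb{F}_2$ to these rotations extends to a homomorphism whose kernel is exactly the set of nontrivial reduced words mapping to the identity, which independence forces to be trivial). So the whole weight of the theorem is in producing two independent rotations.

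For the candidates I would take $\phi = \arccos(1/3)$ and let $A$ be the rotation by $\phi$ about the $z$-axis and $B$ the rotation by $\phi$ about the $x$-axis, so that
\[
A^{\pm1}=\frac{1}{3}\begin{pmatrix} 1 & \mp 2\sqrt{2} & 0 \\ \pm 2\sqrt{2} & 1 & 0 \\ 0 & 0 & 3 \end{pmatrix},\qquad
B^{\pm1}=\frac{1}{3}\begin{pmatrix} 3 & 0 & 0 \\ 0 & 1 & \mp 2\sqrt{2} \\ 0 & \pm 2\sqrt{2} & 1 \end{pmatrix}.
\]
The point of the factor $1/3$ is that each of $3A^{\pm1}$ and $3B^{\pm1}$ is an integer matrix whose only irrational entries are multiples of $\sqrt{2}$. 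I would first normalise: since conjugation preserves whether a group element is the identity, and since $B$ fixes the $x$-axis (so $B\cdot(1,0,0)=(1,0,0)$), I would conjugate any nontrivial reduced word $w$ so that its rightmost letter (the one applied first to a vector) is $A^{\pm1}$. This ensures the word genuinely moves the test point $(1,0,0)$ off the axis fixed by $B$.

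The heart of the argument is the following claim, proved by induction on the length $k$ of the (reduced, rightmost-letter-$A^{\pm1}$) word $w$: applying $w$ to $(1,0,0)^{T}$ yields $3^{-k}(a,\,b\sqrt{2},\,c)^{T}$ with $a,b,c\in\mathbb{Z}$ and $3\nmid b$. Granting this, $3\nmid b$ forces $b\neq 0$, so the image has nonzero second coordinate and hence $w\cdot(1,0,0)\neq(1,0,0)$, giving $w\neq e$. The base case is the single letter $A^{\pm1}$, where $b=\pm 2$. For the inductive step I would track the integer triple $(a,b,c)$ modulo $3$ under left multiplication by each of $A^{\pm1},B^{\pm1}$; the recurrences show that after an $A$-type letter one has $c\equiv 0$ together with $a\equiv\mp b$, while after a $B$-type letter one has $a\equiv 0$ together with $b\equiv\pm c$.

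The main obstacle is exactly the preservation of $3\nmid b$. When a new letter is applied, the new value $b'$ has the shape $b\pm(\text{one of the other two coordinates})$, so whether $3\mid b'$ depends on the \emph{previous} letter. This is precisely where reducedness must be used: the forbidden cancelling predecessor (say an $A$ immediately following an $A^{-1}$) is exactly the configuration in which the auxiliary relation would give $b'\equiv 0$, whereas every permitted predecessor yields $b'\equiv b$ or $b'\equiv -b \pmod 3$, both nonzero by the inductive hypothesis. Carrying out this bookkeeping cleanly — four cases for the current letter, each split according to which predecessors reducedness allows — is the delicate part, and it is the step that would require the most care to write without error. Once it is done, independence of $A$ and $B$ follows, and with it the existence of a free subgroup of $SO_3$ of rank $2$.
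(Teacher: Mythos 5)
Your proposal is correct and takes essentially the same approach as the paper: exhibit two explicit rotations about perpendicular axes through an angle with rational cosine, normalise any nontrivial reduced word by conjugation so that its rightmost letter is the generator that does not fix $(1,0,0)$, and then induct on word length to show the image of $(1,0,0)$ has middle coordinate not divisible by a fixed prime, with reducedness excluding exactly the cancelling predecessor in the modular bookkeeping. The only difference is in the constants: you use $\arccos(1/3)$ and divisibility by $3$, tracking vectors of the form $(a, b\sqrt{2}, c)/3^{k}$, whereas the paper uses $\arccos(3/5)$ and divisibility by $5$, tracking $(a,b,c)/5^{n}$.
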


\begin{proof} Let $\sigma$ be a counter clockwise rotation about the z-axis through an angle of $\theta = cos^{-1} \left (\frac {3}{5} \right)$. Let $\tau$ be a counter clockwise rotation about the x-axis through the same angle $\theta$.

We have\\
\[\sigma =  \left( \begin{array}{ccc}
\frac {3}{5}  & \frac {-4}{5}  & 0 \\
\frac {4}{5}  & \frac {3}{5}  & 0 \\
0 & 0 & 1 \end{array} \right)\] 

and\\
\[\tau =  \left( \begin{array}{ccc}
 1 & 0 & 0 \\
0 & \frac {3}{5}  &  \frac {-4}{5} \\
0 & \frac {4}{5}  & \frac {3}{5} \end{array} \right)\] 

We wish to show that no non-trivial reduced word in $\{ \sigma, \sigma^{-1}, \tau, \tau^{-1} \}$ is the identity homeomorphism. 
We claim that conjugation by $\sigma$ does not effect whether or not a word is the identity. To see this, let $y$ be some non-trivial, reduced word in $\{ \sigma, \sigma^{-1}, \tau, \tau^{-1} \}$ and let $\gamma = \sigma^{-1} \cdot y \cdot \sigma$. There are two cases;
\begin{itemize}
\item[(i)] $y$ contains either $\tau$ or $\tau^{-1}$. Then $\gamma$ must be non-trivial when reduced.
\item[(ii)] $y$ does not contain either $\tau$ or $\tau^{-1}$. Then, since $y$ was assumed to be reduced, $y$ can only be either a sequence of $\sigma$ or a sequence of $\sigma^{-1}$ of length $>$ 0. No complete reduction can occur in $\gamma$ (leaving $\gamma$ non-trivial when reduced), hence our claim is true.
\end{itemize}

In order to prove the theorem, we need only to consider words ending (on the right) with $\sigma^{\pm 1}$ (words ending in $\sigma^{-1}$ after conjugation by $\sigma$ for the case when $y=...\sigma^{-1} \cdot \sigma^{-1}$, or $y = \sigma^{-1}$). We now claim that for any such word $\omega$ the point $\omega ( 1, 0, 0) $  will take the form $(a, b, c)/5^{n}$, where n is the integer length of $\omega$; $a,b$ and $c$ are all integers with 
$b$ not divisible by 5. Given that zero is indeed divisible by 5, this claim implies that $\omega ( 1, 0, 0) \not= ( 1, 0, 0) $, so $\omega$ is not the identity homeomorphism.\\

Let us prove this claim by induction;\\
For $n = 1$, we have $\omega = \sigma^{\pm 1}$ and so
\begin{align*}
\omega(1,0,0) &= \sigma^{\pm 1}(1,0,0) \\
& = \frac {1}{5}  \left (\begin{array}{ccc}
3& \mp 4 & 0 \\
\pm 4  & 3  & 0 \\
0 & 0 & 5 \end{array} \right)
\left (\begin{array}{c}
1\\
0\\
0\end{array} \right)\\
& = (3, \pm 4, 0) / 5
\end{align*}
which is of the required form.

Fix $k \in \mathbb{N} \setminus \{1\}$. Assume claim is true for $n = 1, 2, .., k-1$. Let $\omega_n$ be any non-trivial, reduced word ending in $\sigma^{\pm 1}$. By assumption, $\omega_n (1,0,0) = (a_n, b_n, c_n) / 5^n$, where $a_n, b_n, c_n \in \mathbb{Z}$ and $b_n$ not divisible by 5.

Write $\omega_k = \phi \omega_{k-1}$ for any reduced word $\omega_{k}$, where $\mid \omega_{k} \mid = k$ and $\phi \in \{\sigma, \sigma^{-1}, \tau, \tau^{-1} \}$.There are four cases:
\begin{itemize}
\item[Case 1:] $\omega_k = \sigma \omega_{k-1}$
\begin{align*}
\omega_k (1,0,0) &= \sigma \omega_{k-1}(1,0,0)\\
& =\sigma(a_{k-1}, b_{k-1}, c_{k-1})/5^{k-1}\\
& = \frac {1}{5^k}  \left (\begin{array}{c}
{3a_{k-1} - 4b_{k-1}}\\
{4a_{k-1} + 3b_{k-1}} \\
{5c_{k-1}} \end{array} \right)\\
\end{align*}

We can see that $a_k = 3a_{k-1} - 4b_{k-1}$ ,  $b_k = 4a_{k-1} + 3b_{k-1}$ , $c_k = 5c_{k-1}$ are all integers. We must check now to see if 
\begin{equation}
b_k = 4a_{k-1} + 3b_{k-1}
\end{equation} is divisible by 5. To do this, we need to look now at 3 more cases;  $ \omega_k = \sigma \tau^{\pm1} \omega_{k-2}$ , $\omega_k = \sigma \sigma \omega_{k-2}$, where $\omega_{k-2}$ is possibly the empty word. 

For the first two cases, we already know that $b_{k-1}$ is not  divisible by 5, so what we really want to check is if $a_{k-1}$ is divisible by 5, for then the addition of the two terms will yield a number not divisible by 5. We have
\begin{align*}
\omega_k (1,0,0) &= \sigma \tau^{\pm1} \omega_{k-2}(1,0,0)\\
&= \sigma \frac {1}{5^{k-2}}  \times \frac {1}{5} \left (\begin{array}{ccc}
5 & 0 & 0 \\
0 & 3 & \mp4\\
0 & \pm4 & 3 \end{array} \right)
\left (\begin{array}{c}
{a_{k-2}}\\
{b_{k-2}}\\
{c_{k-2}}\end{array} \right)\\
&= \sigma \frac {1}{5^{k-1}}  \left (\begin{array}{c}
{5a_{k-2}}\\
{3b_{k-2} \mp 4c_{k-2}}\\
{4b_{k-2}\pm3c_{k-2}} \end{array} \right)
\end{align*}

Since $a_{k-1} = 5a_{k-2}$, $a_{k-1}$ is divisible by 5 when $ \omega_k = \sigma \tau^{\pm1} \omega_{k-1}$. 

For the second case, we check to see if $b_k$ is divisible by 5 based on the assumption that both $b_{k-1}$ and $b_{k-2}$ are not divisible by 5. We have
\begin{align*}
\omega_k (1,0,0) &= \sigma \sigma \omega_{k-2}(1,0,0)\\
&= \sigma \frac {1}{5^{k-2}}  \times \frac {1}{5} \left (\begin{array}{ccc}
3 & -4 & 0 \\
4 & 3 & 0\\
0 & 0 & 5 \end{array} \right)
\left (\begin{array}{c}
{a_{k-2}}\\
{b_{k-2}}\\
{c_{k-2}}\end{array} \right)\\
&= \sigma \frac {1}{5^{k-1}}  \left (\begin{array}{c}
{3a_{k-2} - 4b_{k-2}}\\
{4a_{k-2} +3b_{k-2}} \\
{5c_{k-2}}\end{array} \right)
\end{align*}

So now,
\begin{align}
a_{k-1} &= 3a_{k-2} - 4b_{k-2}\\
b_{k-1} &= 4a_{k-2} + 3b_{k-2} \Rightarrow 4a_{k-2} = b_{k-1} -  3b_{k-2}
\end{align}

Subbing equations (3) and (4) into (2) gives
\begin{align*}
b_{k} &= 4( 3a_{k-2} - 4b_{k-2}) + 3b_{k-1}\\
&= 3(b_{k-1} -  3b_{k-2}) -16b_{k-2} + 3b_{k-1}\\
&= 6b_{k-1} - 25b_{k-2}
\end{align*}
Hence $b_k$ is not divisible by 5 for Case 1.

\item[Case 2:] $\omega_k = \sigma^{-1} \omega_{k-1}$
\begin{align*}
\omega_k (1,0,0) &= \sigma^{-1} \omega_{k-1}(1,0,0)\\
& = \frac {1}{5^k}  \left (\begin{array}{c}
{3a_{k-1} +4b_{k-1}}\\
{-4a_{k-1} + 3b_{k-1}} \\
{5c_{k-1}} \end{array} \right)\\
\end{align*}

Again, we see that $a_k = 3a_{k-1} + 4b_{k-1}$ ,  $b_k = -4a_{k-1} + 3b_{k-1}$ , $c_k = 5c_{k-1}$ are all integers. We must check now to see if 
\begin{equation}
b_k = -4a_{k-1} + 3b_{k-1}
\end{equation} is divisible by 5. So we take the cases 
$\omega_k = \sigma^{-1} \tau^{\pm1} \omega_{k-2}$ and $\omega_k = \sigma^{-1}\sigma^{-1} \omega_{k-2}$. 
The same working from Case 1 applies here in that $a_{k-1} = 5a_{k-2}$, so $a_{k-1}$ is divisible by 5 when $ \omega_k = \sigma^{-1} \tau^{\pm1} \omega_{k-1}$. Now we have
\begin{align*}
\omega_k (1,0,0) &= \sigma^{-1} \sigma^{-1}  \omega_{k-2}(1,0,0)\\
&= \sigma^{-1}  \frac {1}{5^{k-1}}  \left (\begin{array}{c}
{3a_{k-2} + 4b_{k-2}}\\
{-4a_{k-2} +3b_{k-2}} \\
{5c_{k-2}}\end{array} \right)
\end{align*}

So now,
\begin{align}
a_{k-1} &= 3a_{k-2} + 4b_{k-2}\\
b_{k-1} &= -4a_{k-2} + 3b_{k-2} \Rightarrow 4a_{k-2} = 3b_{k-2} - b_{k-1}
\end{align}

Subbing equations (6) and (7) into (5) gives
\begin{align*}
b_{k} &= -4( 3a_{k-2} + 4b_{k-2}) + 3b_{k-1}\\
&= -3( 3b_{k-2} - b_{k-1}) -16b_{k-2} + 3b_{k-1}\\
&= 6b_{k-1} - 25b_{k-2}
\end{align*}
Hence $b_k$ is not divisible by 5 for Case 2.

\item[Case 3:] $\omega_k = \tau \omega_{k-1}$
\item[Case 4:] $\omega_k = \tau^{-1} \omega_{k-1}$
\end{itemize}
A similar proof can be used to show that $b_k$ is not divisible by 5 for Case 3 and Case 4, and that $a_k$, $b_k$, $c_k$ are all integer values.

\end{proof}

\begin{defi} A set $S$ is \emph{countable} if it has the same cardinality as a subset of $\mathbb{N}$. i.e, if $\mid S \mid = \mid N \mid$, $N \subseteq \mathbb{N}$
\end{defi}

\begin{lem} Let $D = \{ x \in S^2: \text{ x is fixed by some element in }F_2\setminus\{e\} \}$. Then $D$ is countable and $S^2$ is $SO_3$-equidecomposable with $S^2\setminus D$.
\end{lem}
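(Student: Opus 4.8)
The plan is to prove the two assertions separately: that $D$ is countable, and that $S^2 \sim_{SO_3} S^2 \setminus D$. The second part is a direct generalisation of the ``absorption'' trick from Theorem 3.5, the only new feature being that we now wish to absorb a whole countable set rather than a single point.

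For countability, I would first invoke Theorem 4.1 to regard $F_2$ as the free subgroup $\langle \sigma, \tau \rangle$ of $SO_3$. Since every element of $F_2$ is a reduced finite word in two generators and their inverses, $F_2$ is a countable set, and hence so is $F_2 \setminus \{e\}$. Each non-identity $g \in F_2$ is a non-trivial rotation of $\mathbb{R}^3$ about an axis through the origin, and such a rotation fixes exactly the two antipodal points where that axis meets $S^2$. Writing $F(g)$ for this two-element set, we have $D = \bigcup_{g \in F_2 \setminus \{e\}} F(g)$, a countable union of finite sets, and is therefore countable.

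For the equidecomposability I would seek a single rotation $\rho \in SO_3$ whose forward iterates push $D$ off itself, so that $D$ can be absorbed. Precisely, I want $D, \rho \cdot D, \rho^2 \cdot D, \dots$ to be pairwise disjoint; since $\rho^n \cdot D \cap \rho^m \cdot D = \rho^m \cdot (\rho^{n-m} \cdot D \cap D)$, it suffices to arrange $\rho^n \cdot D \cap D = \emptyset$ for every $n \ge 1$. To build $\rho$, fix an axis $\ell$ through the origin whose two points on $S^2$ lie outside $D$; this is possible because there are uncountably many axes but only countably many meet the countable set $D$. Let $\rho_\theta$ be rotation about $\ell$ by angle $\theta$. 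For each fixed $n \ge 1$ and each pair $(x,y) \in D \times D$ (note $x,y \notin \ell$, so neither is fixed by the $\rho_\theta$), the equation $\rho_{n\theta} \cdot x = y$ holds for only finitely many $\theta \in [0, 2\pi)$. Ranging over all $n \ge 1$ and all pairs in the countable set $D \times D$ yields only countably many forbidden angles, so some $\theta_0$ avoids them all; set $\rho = \rho_{\theta_0}$. Then, letting $\widetilde{D} = \bigcup_{n=0}^{\infty} \rho^n \cdot D$, disjointness of the iterates gives $\rho \cdot \widetilde{D} = \bigcup_{n=1}^{\infty} \rho^n \cdot D = \widetilde{D} \setminus D$, so I may write the disjoint partitions
\[
S^2 = \widetilde{D} \cup (S^2 \setminus \widetilde{D})
\quad\text{and}\quad
S^2 \setminus D = (\rho \cdot \widetilde{D}) \cup (S^2 \setminus \widetilde{D}).
\]
Applying $\rho$ to $\widetilde{D}$ and the identity to $S^2 \setminus \widetilde{D}$ exhibits $S^2$ and $S^2 \setminus D$ as partitioned into two respectively $SO_3$-congruent pieces, whence $S^2 \sim_{SO_3} S^2 \setminus D$.

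I expect the main obstacle to be the construction of $\rho$, specifically verifying that the set of forbidden angles is genuinely countable. The delicate points are choosing $\ell$ to miss $D$ (so that no point of $D$ is fixed by every $\rho_\theta$, which would make absorption impossible) and confirming that for each $n$ and each pair of points only finitely many angles produce a collision. This is the higher-cardinality analogue of the observation in Theorem 3.5 that $\theta^n \cdot 1$ never returns to $1$, and it is precisely where the uncountability of $[0,2\pi)$ — and hence the Axiom of Choice lurking in the background — does the essential work.
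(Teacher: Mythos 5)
Your proof is correct and takes essentially the same route as the paper: countability of $D$ as a countable union of finite fixed-point sets, followed by the absorption trick --- choosing an axis $\ell$ whose poles avoid $D$, picking a rotation about $\ell$ outside a countable set of ``bad'' rotations so that the iterates $D, \rho \cdot D, \rho^2 \cdot D, \dots$ are pairwise disjoint, and then exhibiting the two-piece decompositions of $S^2$ and $S^2 \setminus D$. If anything, your justification that only finitely many angles per $n$ and per pair $(x,y) \in D \times D$ can cause a collision makes precise a step the paper states rather loosely.
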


\begin{proof}
Each non-identity rotation in $F_2$ fixes precisely two points on $S^2$, namely the intersection of the axis of rotation with the sphere. $D$ is the collection of such points. 

Let $D_n$ denote the points on $S^2$ fixed by words in $F_2\setminus \{e\}$ of length $n \in \mathbb{N}$.
\begin{align*}
\mid D_1 \mid = & 4 \\ 
\mid D_2 \mid = &4 \times 3 \\ 
\mid D_3 \mid = &4  \times 3^2 \\ 
&\vdots \\ 
\mid D_n \mid = &4 \times 3^{n-1} 
\end{align*}

Then $D=\bigcup^{\infty}_{n=1}D_n$ which is a countable union of finite sets, hence $D$ is countable.

We claim that $S^2$ is not countable. Define a bijection $f$ as
\begin{align*}
f: S^1 &\rightarrow [0,1) \\
(cos\theta, sin\theta) &\mapsto \frac{\theta}{2\pi}
\end{align*}

So $\mid S^1 \mid = \mid[0,1)\mid \ge \mid \mathbb{N} \mid$, given that the interval [0,1) is uncountable. Hence $S^1$ is uncountable. Then, since $S^1 \subseteq S^2$, $S^2$ is uncountable and we have proved our claim.

It follows, since $D$ is countable, and $S^2$ is uncountable, that $S^2 \setminus D$ is non-empty and uncountable. Hence there exists a pair of antipodal points on $S^2$ which aren't elements of $D$. Let $\ell$ be the axis through the centre and these points. We wish to find a rotation $\mu$ about the axis $\ell$ such that $D$, $\mu D$, $\mu^2 D$ ... are disjoint.

Let $A= \{ \rho : \rho \text{ is a rotation about }\ell \text{ and }\rho^mD \bigcap_{m \in \mathbb{N}} D \not= \emptyset \}$.
Now, $\rho^mD \cap D \not= \emptyset \Rightarrow \exists d_1, d_2 \in D : \rho^m d_1=d_2$. Since $d_1$ and $ d_2$ are part of a countable set, then $A$ is countable. Choose a rotation $\mu$ about $\ell$ that is not in $A$. So
\begin{align*}
\mu^{m-n}D\cap D = \emptyset \Rightarrow \mu^{m} D \cap \mu^nD = \emptyset 
\text{ for all }m,n \in \mathbb{N}
\end{align*}
as required.

Let $A_1 = D \cup \rho D \cup \rho^2D \cup ...$ and $A_2 = S^2 \setminus A_1$ so that $A_1 \cap A_2 = \emptyset$ and $A_1 \cup A_2 = S_2$.
Let $B_1 = \rho A_1 =  \rho D \cup \rho^2D \cup ...$ and $B_2 = A_2$ so that $B_1 \cap B_2 = \emptyset$ and $B_1 \cup B_2 = S_2 \setminus D$. We have partitioned $S_2$ and $S_2 \setminus D$ into two $SO_3$-congruent pieces, hence $S_2 \sim S_2 \setminus D$.

\end{proof}

\section{The Banach-Tarski Paradox}
As mentioned in the Abstract, the Banach-Tarski Paradox illustrates how one can cut a solid ball in $\mathbb{R}^3$ into finitely many pieces, where each piece can only be rotated and translated to form two balls identical to the original. 

In proving this, most of the work will be in proving the phenomenon for a sphere in $\mathbb{R}^3$ where the radius of the sphere is irrelevant. So we will endeavour to partition a sphere into two pieces, each of which can be rearranged to form the original sphere. Once we have this we can form a ball (minus the origin) by the infinite union of spheres.

\begin{lem}
Let $x\in S^2 \setminus D$ and $f,g \in \mathbb{F}_2$. Then 
\begin{equation*}
f \cdot x = g \cdot x\Rightarrow f=g
\end{equation*}
\end{lem}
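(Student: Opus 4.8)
The statement asserts that the action of $\mathbb{F}_2$ on $S^2 \setminus D$ is \emph{free}: no two distinct group elements can send a point of $S^2 \setminus D$ to the same image. The natural strategy is to rephrase the hypothesis $f \cdot x = g \cdot x$ as the assertion that a single group element fixes $x$, and then invoke the defining property of $D$ from Lemma 5.2 to force that element to be the identity.

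Concretely, the plan is as follows. First I would start from $f \cdot x = g \cdot x$ and act on both sides by $g^{-1} \in \mathbb{F}_2$. Using associativity of the group action (axiom (i) of Definition 1.2), together with $g^{-1} g = e$ and the identity axiom $e \cdot x = x$, this yields
\begin{equation*}
(g^{-1} f) \cdot x = g^{-1} \cdot (f \cdot x) = g^{-1} \cdot (g \cdot x) = (g^{-1} g) \cdot x = e \cdot x = x.
\end{equation*}
Setting $w = g^{-1} f \in \mathbb{F}_2$, this says precisely that $w$ fixes the point $x$.

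Next I would appeal to the definition of $D$. Recall $D = \{ y \in S^2 : y \text{ is fixed by some element in } \mathbb{F}_2 \setminus \{e\} \}$. Since $x \in S^2 \setminus D$, the point $x$ is fixed by \emph{no} non-identity element of $\mathbb{F}_2$. But we have just shown that $w$ fixes $x$; hence $w$ cannot lie in $\mathbb{F}_2 \setminus \{e\}$, which forces $w = e$. Finally, $g^{-1} f = e$ gives $f = g$ upon multiplying by $g$, completing the argument.

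There is essentially no serious obstacle here: the result is a direct consequence of the definition of $D$ once the hypothesis is reorganized into a fixed-point statement. The only point requiring care is the correct bookkeeping of the group-action axioms in the displayed computation (making sure the reduction from $(g^{-1}f)\cdot x$ down to $x$ uses only associativity and the identity property), and making explicit that $\mathbb{F}_2$ is realized as the free subgroup of $SO_3$ from Theorem 4.1, so that ``fixed point'' means fixed as a rotation of $S^2$ and the definition of $D$ applies verbatim.
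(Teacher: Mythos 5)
Your proposal is correct and follows exactly the paper's own argument: rewrite $f \cdot x = g \cdot x$ as the fixed-point statement $(g^{-1}f)\cdot x = x$, then use the definition of $D$ to conclude $g^{-1}f = e$, hence $f = g$. The extra care you take with the group-action axioms and with identifying $\mathbb{F}_2$ as the free subgroup of $SO_3$ from Theorem 4.1 is sound but does not change the substance of the argument.
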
	
\begin{proof}
\begin{align*}
D &= \{ x \in S^2 : x\text{ is fixed by some point in }\mathbb{F}_2 \setminus \{e\} \} \\
S^2 \setminus D &= \{ x \in S^2 : x\text{ is fixed only by }\{e\} \} 
\end{align*}
So when $f \cdot x = g \cdot x \Rightarrow g^{-1}f \cdot x = x$, $x$ is fixed by $g^{-1}f$. Hence $g^{-1}f = e \Rightarrow g=f$.
\end{proof}	
	
\begin{lem}
Let $x\in S^2$ and $f \in \mathbb{F}_2$. Then
\begin{equation*}
x \in D \Leftrightarrow f \cdot x \in D
\end{equation*}
\end{lem}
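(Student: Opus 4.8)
The plan is to prove both implications by a conjugation argument, using only the definition of $D$ together with the associativity axiom for the group action. The underlying idea is that if a nontrivial word fixes a point $x$, then an appropriate conjugate fixes the moved point $f \cdot x$; consequently membership in $D$ is preserved by the action, which is exactly what the biconditional asserts (it says $D$ is a union of $\mathbb{F}_2$-orbits).

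For the forward implication, I would suppose $x \in D$, so that by definition there exists $w \in \mathbb{F}_2 \setminus \{e\}$ with $w \cdot x = x$. I would then consider the conjugate $f w f^{-1}$ and compute, using action associativity,
\begin{equation*}
(f w f^{-1}) \cdot (f \cdot x) = (f w) \cdot x = f \cdot (w \cdot x) = f \cdot x.
\end{equation*}
Thus $f \cdot x$ is a fixed point of $f w f^{-1}$. It then remains to observe that $f w f^{-1} \neq e$: since $\mathbb{F}_2$ is a group, $f w f^{-1} = e$ would force $w = e$, contradicting $w \neq e$. Hence $f \cdot x$ is fixed by a nontrivial element of $\mathbb{F}_2$, so $f \cdot x \in D$.

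For the reverse implication, the cleanest route is to apply the forward implication to the element $f^{-1}$ and the point $y := f \cdot x$. Indeed, if $f \cdot x \in D$, then the forward direction (with $f$ replaced by $f^{-1}$ and $x$ replaced by $y$) yields $f^{-1} \cdot y \in D$; but
\begin{equation*}
f^{-1} \cdot y = f^{-1} \cdot (f \cdot x) = (f^{-1} f) \cdot x = e \cdot x = x,
\end{equation*}
so $x \in D$. Alternatively one simply repeats the same conjugation computation directly, using the conjugate $f^{-1} v f$, where $v$ is a nontrivial word fixing $f \cdot x$.

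I do not anticipate a genuine obstacle here: every step reduces to the group-action axioms and the elementary fact that conjugation in a group sends nontrivial elements to nontrivial elements. The only point deserving explicit care is the verification that $f w f^{-1} \neq e$, which must be stated so that the element certifying $f \cdot x \in D$ is genuinely nontrivial, as the definition of $D$ requires.
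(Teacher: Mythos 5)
Your proof is correct, and its core is the same as the paper's: both directions rest on the conjugation computation $(fwf^{-1})\cdot(f\cdot x) = f\cdot(w\cdot x) = f\cdot x$. There are two differences worth noting, both in your favour. First, to see that the conjugate $fwf^{-1}$ is nontrivial, you use the elementary fact that in \emph{any} group $fwf^{-1}=e$ forces $w=e$; the paper instead splits into cases $f=e$ and $f\neq e$ and, in the latter case, appeals to the word-level ``conjugation does not affect whether a reduced word is the identity'' discussion from Theorem 4.1. That appeal is unnecessary machinery: once $\mathbb{F}_2$ is known to be a group, nontriviality of conjugates is automatic, and your version makes clear that the lemma holds for an arbitrary group action, not just for free groups of rotations. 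Second, you obtain the reverse implication by applying the forward one to $f^{-1}$ and the point $y=f\cdot x$, whereas the paper repeats the conjugation computation from scratch (with $f^{-1}gf$); your reduction avoids the duplication and eliminates any chance of the two halves being inconsistent. Both proofs are sound; yours is leaner and slightly more general in spirit.
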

	
\begin{proof} ($\Leftarrow$)
\begin{align*}
f \cdot x \in D &\Rightarrow \exists g \in \mathbb{F}_2 \setminus \{e\} \text{ with }gf\cdot x = f \cdot x \\
&\Rightarrow f^{-1}gf \cdot x = x
\end{align*}
In order to show $x\in D$, since $x$ is fixed by $f^{-1}gf$ we are only required to show that $f^{-1}gf \in \mathbb{F}_2 \setminus \{e\} $. If $f = e$, then $f^{-1}gf = g \in \mathbb{F}_2 \setminus \{e\}$. If $f \not= e$, then, since $g$ is a non-trivial word in the free group  $\mathbb{F}_2$, conjugation by $f$ does not affect whether or not a word is the identity (conjugation result discussed in Theorem 4.1);
 $f^{-1}gf \not= e$. So $x\in D$.\\
($\Rightarrow$) 
\begin{align*}
x \in D &\Rightarrow \exists h \in \mathbb{F}_2 \setminus \{e\} \text{ with } h \cdot x =x \\
&\Rightarrow (fhf^{-1})f \cdot x = f(h \cdot x) =f \cdot x
\end{align*}
Similarly, if $f=e$, then $fhf^{-1} = h \in \mathbb{F}_2 \setminus \{e\}$. If $f \not= e$, then
 $f^{-1}hf \not= e$. So $f \cdot x\in D$
\end{proof}	
	
\begin{defi} An \emph{$\mathbb{F}_2$ - orbit} is a set of the form
\begin{equation*}
F_x := \{ f \cdot x : f \in \mathbb{F}_2 \} \text{ for some } x \in S^2 \setminus D
\end{equation*}
Denote a family of $\mathbb{F}_2$-orbits by $(F_i)_{i\in I}$ for some interval $I$.
\end{defi}
	
\begin{lem}
Two $\mathbb{F}_2$-orbits are either disjoint or equal
\end{lem}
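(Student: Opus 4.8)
\subsection*{Proof proposal}

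The plan is to establish the standard dichotomy by proving that any two orbits which are not disjoint must in fact coincide. That is, I would fix $x, y \in S^2 \setminus D$ and, assuming $F_x \cap F_y \neq \emptyset$, show $F_x = F_y$; this covers both alternatives of the statement, since if the orbits do not intersect they are disjoint by definition, and there is nothing further to prove in that case.

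First I would extract a common element. If $F_x \cap F_y \neq \emptyset$, pick $z \in F_x \cap F_y$, so that $z = f \cdot x = g \cdot y$ for some $f, g \in \mathbb{F}_2$. Because $\mathbb{F}_2$ is a group, the inverse $f^{-1}$ exists, and acting by it on both sides together with the associativity and identity axioms of the action gives $x = (f^{-1} g) \cdot y$. The key observation is that this exhibits $x$ itself as a member of $F_y$.

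Next I would show $F_x \subseteq F_y$. Take an arbitrary $w \in F_x$, so $w = h \cdot x$ for some $h \in \mathbb{F}_2$. Substituting the expression just obtained for $x$ and applying associativity of the action, $w = h \cdot ((f^{-1} g) \cdot y) = (h f^{-1} g) \cdot y$, which lies in $F_y$ since $h f^{-1} g \in \mathbb{F}_2$. Hence $F_x \subseteq F_y$. Reversing the roles of $x$ and $y$ (using instead $y = (g^{-1} f) \cdot x$) yields $F_y \subseteq F_x$ by the identical argument, so $F_x = F_y$.

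I do not expect any genuine obstacle here, as the result is simply the familiar fact that the orbits of a group action partition the underlying set; the entire content lies in the group axioms and the action axioms introduced at the start of Section~1, and nothing specific about $D$ or $S^2$ is needed. The only point requiring care is the bookkeeping of inverses and the correct use of the associativity axiom $(gh)\cdot x = g \cdot (h \cdot x)$ when rewriting a point of one orbit as a point of the other.
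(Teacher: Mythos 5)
Your proposal is correct and follows essentially the same route as the paper's proof: fix a common point of the two orbits, use the group and action axioms to express an arbitrary element of one orbit as a group element acting on the other orbit's base point, and conclude both inclusions by symmetry. The only cosmetic difference is that you first solve for $x = (f^{-1}g)\cdot y$ and then substitute, whereas the paper substitutes directly inside the equation for the common point; the algebra is identical.
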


\begin{proof}
Assume two $\mathbb{F}_2$-orbits say, $F_x$ with $x \in S^2 \setminus D$ and $F_y$ with $y \in S^2 \setminus D$, are not disjoint. So $F_x \cap F_y \not= \emptyset$. Fix
\begin{equation}
w \in F_x \cap F_y \Rightarrow w = g \cdot x = h \cdot y \text{ for some }g,h \in \mathbb{F}_2
\end{equation}

We wish to show that $F_x = F_y$. 
\begin{align*}
\text{Fix }z \in F_x  \Rightarrow& z = f \cdot x  \text{ for some }f \in \mathbb{F}_2 \\
\Rightarrow& f^{-1} \cdot z = x \\
\text{subbing into (9) gives } & g(f^{-1} \cdot z) = h \cdot y \\
\Rightarrow& z = (gf^{-1})^{-1} h \cdot y \in F_y
\end{align*}
Therefore,  $F_x \subseteq F_y$.
\begin{align*}
\text{Fix }z \in F_y  \Rightarrow& z = f \cdot y  \text{ for some }f \in \mathbb{F}_2 \\
\Rightarrow& f^{-1} \cdot z = y \\
\text{subbing into (9) gives } & h(f^{-1} \cdot z) = g \cdot x \\
\Rightarrow& z = (hf^{-1})^{-1} g \cdot x \in F_x
\end{align*}
Therefore, $F_y \subseteq F_x$ and $F_y = F_x$.

The other direction of the proof was to show that two disjoint $\mathbb{F}_2$-orbits are not equal, which is trivial.
\end{proof}	
The implication of this Lemma is that if two $\mathbb{F}_2$- orbits have a single point in common, then the orbits are equivalent. 

\begin{lem}
$\mathbb{F}_2$-orbits in $S^2 \setminus D$ union to give $S^2 \setminus D$
\end{lem}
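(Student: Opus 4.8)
The plan is to prove the set equality $\bigcup \{ F_x : x \in S^2 \setminus D \} = S^2 \setminus D$ by establishing the two inclusions separately. The reverse inclusion is immediate, while the forward inclusion rests entirely on the invariance result already established in Lemma 5.2.

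First I would dispatch the reverse inclusion $S^2 \setminus D \subseteq \bigcup_{i \in I} F_i$. For any $x \in S^2 \setminus D$, the identity element $e \in \mathbb{F}_2$ gives $e \cdot x = x$, so $x \in F_x$ by the definition of an $\mathbb{F}_2$-orbit. Since $F_x$ is itself one of the orbits appearing in the union, this shows that $x$ belongs to the union. As $x$ was arbitrary, every point of $S^2 \setminus D$ is captured.

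For the forward inclusion I would show that no orbit escapes $S^2 \setminus D$, that is, for every $x \in S^2 \setminus D$ and every $f \in \mathbb{F}_2$ the point $f \cdot x$ again lies in $S^2 \setminus D$. This is precisely the content of Lemma 5.2: its contrapositive reads $x \notin D \Leftrightarrow f \cdot x \notin D$, so $x \in S^2 \setminus D$ forces $f \cdot x \in S^2 \setminus D$. Hence each orbit satisfies $F_x \subseteq S^2 \setminus D$, and therefore the union of all orbits is contained in $S^2 \setminus D$ as well.

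Combining the two inclusions yields the claimed equality. I expect no genuine obstacle here: the only substantive ingredient is Lemma 5.2, which guarantees that $D$ — and hence its complement in $S^2$ — is stable under the action of $\mathbb{F}_2$. Everything else follows directly from the definition of an orbit, the key observation being simply that this stability is exactly what keeps the orbits living inside $S^2 \setminus D$ rather than spilling into $D$.
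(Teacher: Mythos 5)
Your proof is correct and follows essentially the same route as the paper: the forward inclusion via Lemma 5.2 (stability of $S^2 \setminus D$ under the $\mathbb{F}_2$-action keeps each orbit inside $S^2 \setminus D$), and the reverse inclusion via $e \cdot x = x$ placing each point in its own orbit. The paper additionally cites Lemma 5.4 to identify $F_x$ with a member of the indexed family $(F_i)_{i \in I}$, but this is an indexing technicality rather than a substantive difference.
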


\begin{proof}
$F_x = \{ f \cdot x: f \in \mathbb{F}_2 \}$ for some $x \in S^2 \setminus D$. So $f \cdot x \in S^2 \setminus D$ by Lemma 5.2. Since $x$ is arbitrary, $\bigcup_{i \in I} F_i \subseteq S^2 \setminus D$
\begin{align*}
\text{Now fix }x \in S^2 \setminus D &\Rightarrow F_x = \{ f \cdot x: f \in \mathbb{F}_2 \}\text{ is the $\mathbb{F}_2$-orbit}\\
&\Rightarrow F_x = F_i \text{ for some } i \in I \text{ by Lemma 5.4}\\
&\Rightarrow x \in F_x \text{ because } e\in  \mathbb{F}_2 
\end{align*}
So $x \in F_x \subseteq \bigcup_{i \in I} F_i$. Hence, $\bigcup_{i \in I} F_i = S^2 \setminus D$.
\end{proof}

\subsection{Revisiting $\mathbb{F}_2$; Free group of rank 2}
\begin{lem}
There exists subsets $P_1, P_2 \subseteq \mathbb{F}_2$ where $P_1 \sim_{\mathbb{F}_2} \mathbb{F}_2$ and $P_2 \sim_{\mathbb{F}_2} \mathbb{F}_2$, and $P_1, P_2$ partition $ \mathbb{F}_2$.
\end{lem}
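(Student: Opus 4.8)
The plan is to upgrade the covering decomposition of $\mathbb{F}_2$ established in Section 1 into a genuine \emph{partition} into two self-reproducing halves, which is exactly the refinement signposted in the remark at the end of that section. Guided by equation (1), I would set
\begin{equation*}
P_1 = \{e\} \cup \Psi(a) \cup \Psi(a^{-1}) \quad\text{and}\quad P_2 = \Psi(b) \cup \Psi(b^{-1}).
\end{equation*}
Since equation (1) exhibits $\{e\}, \Psi(a), \Psi(a^{-1}), \Psi(b), \Psi(b^{-1})$ as five pairwise disjoint sets whose union is $\mathbb{F}_2$, it is immediate that $P_1 \cap P_2 = \emptyset$ and $P_1 \cup P_2 = \mathbb{F}_2$, so $P_1, P_2$ partition $\mathbb{F}_2$. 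The remaining task is to produce, for each half, a two-piece partition whose translates under chosen group elements partition all of $\mathbb{F}_2$, which is precisely the content of $P_i \sim_{\mathbb{F}_2} \mathbb{F}_2$.

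I would dispatch $P_2$ first, as it is the clean case. Partition it as $B_1 = \Psi(b^{-1})$ and $B_2 = \Psi(b)$. The cancellation argument from Section 1 gives $b \cdot \Psi(b^{-1}) = \mathbb{F}_2 \setminus \Psi(b)$, so $b \cdot B_1$ and $e \cdot B_2 = \Psi(b)$ are disjoint and union to $\mathbb{F}_2$. Thus $P_2$ and $\mathbb{F}_2$ are each partitioned into two congruent pieces, via the elements $b$ and $e$, giving $P_2 \sim_{\mathbb{F}_2} \mathbb{F}_2$.

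The delicate half is $P_1$, and I expect the stray identity element to be the main obstacle. The naive split $A_1 = \Psi(a^{-1})$, $A_2 = \Psi(a) \cup \{e\}$ fails: $a \cdot \Psi(a^{-1}) = \mathbb{F}_2 \setminus \Psi(a)$ already contains $e$, so the two images overlap at $e$ and one obtains a covering, not a partition. The remedy is the same absorb-a-point trick used in Theorems 3.4 and 3.5, implemented here by shifting an infinite tail of powers of $a$:
\begin{equation*}
A_1 = \Psi(a^{-1}) \cup \{e, a, a^2, a^3, \dots\}, \qquad A_2 = \Psi(a) \setminus \{a, a^2, a^3, \dots\}.
\end{equation*}
I would then verify the routine set identities: $A_1 \cap A_2 = \emptyset$ and $A_1 \cup A_2 = P_1$, while $a \cdot A_1 = (\mathbb{F}_2 \setminus \Psi(a)) \cup \{a, a^2, \dots\}$ and $e \cdot A_2$ are disjoint with union $\mathbb{F}_2$. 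The key point is that left multiplication by $a$ sends the tail $\{e, a, a^2, \dots\}$ to $\{a, a^2, \dots\}$, reabsorbing the offending $e$ into exactly the positive powers that were deleted from $A_2$. This yields $P_1 \sim_{\mathbb{F}_2} \mathbb{F}_2$ with group elements $a$ and $e$, and together with the $P_2$ case completes the proof.
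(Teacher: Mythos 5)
Your proposal is correct and follows essentially the same route as the paper: identical choices of $P_1 = \{e\} \cup \Psi(a) \cup \Psi(a^{-1})$ and $P_2 = \Psi(b) \cup \Psi(b^{-1})$, the same pieces $A_1 = \Psi(a^{-1}) \cup \{e, a, a^2, \dots\}$, $A_2 = \Psi(a) \setminus \{a, a^2, \dots\}$, $B_1 = \Psi(b^{-1})$, $B_2 = \Psi(b)$, and the same translating elements $a, e$ and $b, e$. If anything, your writeup is more careful than the paper's, whose statement of $A_2$ is notationally ambiguous and whose verification contains typos (e.g.\ writing $P_1 \cup P_2 = \emptyset$ where $P_1 \cap P_2 = \emptyset$ is meant); your explicit identification of why the naive split fails at $e$ is exactly the point the tail-shift trick addresses.
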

We can divide $\mathbb{F}_2$ into two sets, each of which has pieces that can be acted upon in such a way so that each set forms $\mathbb{F}_2$ again. We began looking at free groups of rank 2 in Theorem 1.4, although a similar concept, this lemma is a much stronger version. Let us adopt the notation defined previously. 

\begin{proof}
\begin{align*}
A_1 &= \Psi( a^{-1}  ) \cup \{e\} \cup \{a\} \cup \{a^2\} \cup \{a^3\}... \\
A_2 &= \Psi(a) \setminus \{e\} \cup \{a\} \cup \{a^2\} \cup \{a^3\}...\\
B_1 &= \Psi(b^{-1} )\\
B_2 &= \Psi(b)
\end{align*}
Let $P_1 = A_1 \cup A_2$, $P_2 = B_1 \cup B_2$. $A_1$ and $A_2$ are disjoint so clearly $A_1$, $A_2$ partition $P_2$.  As previously mentioned in Theorem 1.4, $(a \cdot A_1) \cup A_2 = \mathbb{F}_2$. Hence $P_1 \sim_{\mathbb{F}_2} \mathbb{F}_2$. Similarly $B_1$, $B_2$ partition $P_2$ and $b \cdot B_1 \cup B_2 = \mathbb{F}_2$. Hence $P_2 \sim_{\mathbb{F}_2} \mathbb{F}_2$. Also, 
\begin{align*}
(A_1 \cup A_2) \cup (B_1 \cup B_2) =  \mathbb{F}_2 &\Rightarrow P_1 \cup P_2 =  \mathbb{F}_2 \\
\text{and }(A_1 \cap A_2) \cap (B_1 \cap B_2) =  \emptyset &\Rightarrow P_1 \cup P_2 = \emptyset
\end{align*}
So $P_1, P_2$ partition $ \mathbb{F}_2$.
\end{proof}

\subsection{Tying it all together}
\begin{thm}[Banach-Tarski Paradox with spheres]
There exists subsets $S_1, S_2 \subseteq S^2$ that partition $S^2$ and $S_1 \sim_{SO_3} S^2$, $S_2 \sim_{SO_3} S^2$
\end{thm}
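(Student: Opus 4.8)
\section*{Proof proposal}

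The plan is to transport the strong paradoxical decomposition of $\mathbb{F}_2$ furnished by Lemma 5.6 up to the sphere, and then to reabsorb the exceptional set $D$ at the very end using the additivity of equidecomposability. First I would invoke the Axiom of Choice to fix a set $M$ containing exactly one point from each $\mathbb{F}_2$-orbit in $S^2 \setminus D$. By Lemma 5.5 every point of $S^2 \setminus D$ lies in some orbit $F_x$ with $x \in M$, so every such point can be written as $f \cdot x$ for some $f \in \mathbb{F}_2$ and $x \in M$; by Lemma 5.1 the action of $\mathbb{F}_2$ on $S^2 \setminus D$ is free, so this representation is unique. In other words the map $(f,x) \mapsto f \cdot x$ is a bijection from $\mathbb{F}_2 \times M$ onto $S^2 \setminus D$.

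Given the partition $\mathbb{F}_2 = P_1 \cup P_2$ of Lemma 5.6, I would lift it by setting
\begin{equation*}
\widetilde{P_1} = \{ f \cdot x : f \in P_1,\ x \in M \} \quad\text{and}\quad \widetilde{P_2} = \{ f \cdot x : f \in P_2,\ x \in M \}.
\end{equation*}
Because the representation $f \cdot x$ is unique and $P_1, P_2$ are disjoint and cover $\mathbb{F}_2$, the sets $\widetilde{P_1}, \widetilde{P_2}$ are disjoint and cover $S^2 \setminus D$. Next I would transfer the relation $P_1 \sim_{\mathbb{F}_2} \mathbb{F}_2$: writing $P_1 = A_1 \cup A_2$ with $(a \cdot A_1) \cup A_2 = \mathbb{F}_2$ as in Lemma 5.6, the lifts $\widetilde{A_1}, \widetilde{A_2}$ partition $\widetilde{P_1}$, and applying the rotation $a \in \mathbb{F}_2 \subseteq SO_3$ gives
\begin{equation*}
(a \cdot \widetilde{A_1}) \cup \widetilde{A_2} = \big( (a A_1 \cup A_2) \cdot M \big) = \mathbb{F}_2 \cdot M = S^2 \setminus D.
\end{equation*}
Hence $\widetilde{P_1} \sim_{SO_3} S^2 \setminus D$, and identically $\widetilde{P_2} \sim_{SO_3} S^2 \setminus D$. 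The crucial point that makes this lifting legitimate is freeness (Lemma 5.1): it is what guarantees that disjoint families of words produce disjoint families of points and that a word acting on a lifted piece cannot accidentally collapse two orbit representatives together.

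Finally I would climb from $S^2 \setminus D$ back to $S^2$. Composing the two equidecompositions above with Lemma 4.4, which gives $S^2 \setminus D \sim_{SO_3} S^2$, and using transitivity of $\sim_{SO_3}$ (obtained by taking a common refinement of the two decompositions and composing the group elements), I obtain $\widetilde{P_1} \sim_{SO_3} S^2$ and $\widetilde{P_2} \sim_{SO_3} S^2$. It remains to place the deleted set $D$; I would set $S_1 = \widetilde{P_1}$ and $S_2 = \widetilde{P_2} \cup D$, so that $S_1, S_2$ partition all of $S^2$. Then $S_1 \sim_{SO_3} S^2$ directly, while for $S_2$ I would apply Lemma 3.2 to the disjoint pairs $(\widetilde{P_2}, D)$ and $(S^2 \setminus D, D)$ — using $\widetilde{P_2} \sim_{SO_3} S^2 \setminus D$ and the trivial $D \sim_{SO_3} D$ — to conclude $S_2 = \widetilde{P_2} \cup D \sim_{SO_3} (S^2 \setminus D) \cup D = S^2$. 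I expect the main obstacle to be the transfer step: one must check carefully that freeness really does make $(f,x) \mapsto f \cdot x$ a bijection, so that the combinatorial partition of $\mathbb{F}_2$ lifts faithfully, since everything downstream rests on the lifted pieces being genuinely disjoint and genuinely reassembling $S^2 \setminus D$.
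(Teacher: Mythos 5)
Your proposal is correct and follows essentially the same route as the paper: choose orbit representatives $M$ by the Axiom of Choice, lift the partition $P_1, P_2$ of $\mathbb{F}_2$ from Lemma 5.6 to $S^2 \setminus D$, absorb $D$ into one piece via Lemma 3.2, and finish with $S^2 \setminus D \sim_{SO_3} S^2$; attaching $D$ to $\widetilde{P_2}$ rather than $\widetilde{P_1}$ is an immaterial relabelling. If anything, your explicit observation that freeness (Lemma 5.1) together with orbit-disjointness makes $(f,x) \mapsto f \cdot x$ a bijection is a point the paper's Lemma 5.7 glosses over, and it is exactly what justifies $P_1 M \cap P_2 M = (P_1 \cap P_2)M$.
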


We wish to find a set $M \subseteq S^2$ through which we act on to translate the equidecomposibility properties of $\mathbb{F}_2$ over to $S^2$. By the Axiom of Choice, 
$\exists M \subseteq S^2 \setminus D\text{ such that }\forall i \in I$, $| M \cap F_i | = 1$.
Plainly put, $M$ is manifested by taking one element from each $F$-orbit, $F_i$.

\begin{lem}
For $P_1, P_2 \subseteq \mathbb{F}_2$ and $M \subseteq S^2 \setminus D$, define
$P_1 M = \{ f \cdot m : f \in P_1, m\in M \}$, $P_2 M = \{ f \cdot m : f \in P_2, m\in M \}$. Then, $P_1 M$, $P_2 M \subseteq S^2 \setminus D$ and 
\begin{align*}
P_1M \cup P_2 M &= (P_1 \cup P_2) M \\
P_1M \cap P_2 M &= (P_1 \cap P_2) M 
\end{align*}
\end{lem}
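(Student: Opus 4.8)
The plan is to prove three separate assertions: the containment $P_1 M, P_2 M \subseteq S^2 \setminus D$, the union identity, and the intersection identity. The union identity will follow from pure set theory and needs no special properties of $M$, whereas the genuine content lies in the intersection identity, whose forward inclusion forces us to invoke both the injectivity results and the defining property of the choice set $M$.

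For the containment, I would fix a typical element $f \cdot m \in P_1 M$ with $f \in P_1 \subseteq \mathbb{F}_2$ and $m \in M \subseteq S^2 \setminus D$. Since $\mathbb{F}_2 \subseteq SO_3$, the rotation $f$ maps $S^2$ into itself, so $f \cdot m \in S^2$. To see $f \cdot m \notin D$, I would apply Lemma 5.2 in contrapositive form: because $m \notin D$, the equivalence $x \in D \Leftrightarrow f \cdot x \in D$ gives $f \cdot m \notin D$. Hence $f \cdot m \in S^2 \setminus D$, and the identical argument handles $P_2 M$.

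For the union identity I would argue by mutual inclusion. An element of $(P_1 \cup P_2) M$ has the form $f \cdot m$ with $f \in P_1 \cup P_2$; according to which of $P_1, P_2$ contains $f$, it lies in $P_1 M$ or in $P_2 M$, hence in the union. Conversely any element of $P_1 M \cup P_2 M$ is $f \cdot m$ with $f$ in $P_1$ or $P_2$, so $f \in P_1 \cup P_2$. This establishes equality without using any structure of $M$. The inclusion $(P_1 \cap P_2) M \subseteq P_1 M \cap P_2 M$ is likewise immediate: if $f \in P_1 \cap P_2$ then $f \cdot m$ lies in both $P_1 M$ and $P_2 M$.

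The reverse inclusion $P_1 M \cap P_2 M \subseteq (P_1 \cap P_2) M$ is the main obstacle and is precisely where the argument uses that $M$ meets each orbit exactly once. Given $x \in P_1 M \cap P_2 M$, I would write $x = f_1 \cdot m_1 = f_2 \cdot m_2$ with $f_1 \in P_1$, $f_2 \in P_2$ and $m_1, m_2 \in M$. Rearranging yields $m_2 = (f_2^{-1} f_1) \cdot m_1$, so $m_1$ and $m_2$ lie in a common $\mathbb{F}_2$-orbit; by Lemma 5.4 that orbit is unique, and since $|M \cap F_i| = 1$ and both $m_1, m_2$ belong to $M$, I conclude $m_1 = m_2$. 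Writing $m$ for this common point, the equation $f_1 \cdot m = f_2 \cdot m$ together with Lemma 5.1 forces $f_1 = f_2$. This common element then lies in $P_1 \cap P_2$, so $x = f_1 \cdot m \in (P_1 \cap P_2) M$, completing the proof. The crux throughout is this double application of uniqueness, first of the orbit representative and then of the group element, which is exactly what the Axiom of Choice construction of $M$ and the free action on $S^2 \setminus D$ were designed to deliver.
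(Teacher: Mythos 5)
Your proof is correct, and it is in fact more complete than the paper's own. The paper proves the union identity by the same elementwise manipulation you use, and establishes $P_1M,\, P_2M \subseteq S^2 \setminus D$ via Lemma 5.2 exactly as you do; but for the intersection identity it merely says ``similar working shows that $f \in P_1M \cap P_2M \Leftrightarrow f \in (P_1\cap P_2)M$'', and that is precisely where the analogy with the union breaks down. As pure set theory only the inclusion $(P_1 \cap P_2)M \subseteq P_1M \cap P_2M$ is automatic; the reverse inclusion is false for a general subset $M \subseteq S^2\setminus D$ (take $M = \{m,\, g\cdot m\}$ with $g \neq e$, $P_1 = \{e\}$, $P_2 = \{g\}$: then $g \cdot m \in P_1M \cap P_2M$ while $(P_1\cap P_2)M = \emptyset$), so the lemma really must be read with $M$ being the choice set, and some argument must use that fact. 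Your proposal supplies exactly the missing content: writing an element as $f_1 \cdot m_1 = f_2 \cdot m_2$ forces $m_1$ and $m_2$ into a common orbit, the property $|M \cap F_i| = 1$ then forces $m_1 = m_2$, and freeness of the action on $S^2 \setminus D$ (Lemma 5.1) forces $f_1 = f_2 \in P_1 \cap P_2$. The paper's route buys brevity at the cost of a genuine gap; yours makes visible that the intersection identity is where the Axiom of Choice construction of $M$ and Lemma 5.1 actually do their work, which matters later when the main proof uses this lemma to conclude $S_1 \cap S_2 = \emptyset$.
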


\begin{proof}
\begin{align*}
f \in P_1M \cup P_2 M \Leftrightarrow& f \in \{ \{f \cdot m : f \in P_1, m\in M \} \cup \{ f \cdot m : f \in P_2, m\in M \} \} \\
\Leftrightarrow& f \in \{f \cdot m : f \in P_1 \text{ or }f \in P_2, m\in M \} \\
\Leftrightarrow& f \in \{f \cdot m : f \in P_1\cup P_2, m\in M \} \\
\Leftrightarrow& f \in (P_1 \cup P_2) M 
\end{align*}
Similar working shows that $f \in P_1M \cap P_2 M \Leftrightarrow f \in (P_1 \cap P_2) M $.

Lemma 5.2 gives for $m \in M \subseteq (S^2 \setminus D) \subseteq S^2$ and $f \in P_1 \subseteq \mathbb{F}_2$, that $m \in S^2 \setminus D \Leftrightarrow f\cdot m \in S^2 \setminus D$. Hence, $P_1 M = \{ f \cdot m : f \in P_1, m\in M \} \subseteq S^2 \setminus D$. Similarly, $P_2 M\subseteq S^2 \setminus D$.
\end{proof}

\begin{proof}[Proof of the Banach-Tarski paradox with spheres]
It will be shown that, because $P_1$ and $P_2$ union to give $\mathbb{F}_2$, then $P_1 M$ and $P_2 M$ union to give $\mathbb{F}_2 M$.
\begin{align*}
P_1M \cup P_2 M &= (P_1 \cup P_2) M &\text{(Lemma 5.7)}\\
&= \mathbb{F}_2 M \\
&= \{ f \cdot m : f \in \mathbb{F}_2, m\in M \subseteq (S^2 \setminus D)  \} \\
&= \text{union of all $F$-orbits in $S^2 \setminus D$} \\
&= S^2 \setminus D &\text{(Lemma 5.5)}
\end{align*}
Also, since $P_1$ and $P_2$ are disjoint in $\mathbb{F}_2$, then $P_1 M$ and $P_2 M$ are disjoint in $S^2$
\begin{align*}
P_1M \cap P_2 M &= (P_1 \cap P_2) M &\text{(Lemma 5.7)}\\
&= \emptyset M \\
&= \emptyset
\end{align*}

\begin{lem}
$P_1 M \sim_{SO_3} \mathbb{F}_2 M$ and $P_2 M \sim_{SO_3} \mathbb{F}_2 M$
\end{lem}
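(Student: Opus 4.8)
The plan is to transport, piece by piece, the $\mathbb{F}_2$-equidecomposition of $P_1$ with $\mathbb{F}_2$ supplied by Lemma 5.6 up to the sphere through the choice set $M$, exploiting that $M$ meets each orbit exactly once and that $\mathbb{F}_2 \subseteq SO_3$ by Theorem 4.1. I will only treat $P_1 M \sim_{SO_3} \mathbb{F}_2 M$ in detail; the statement for $P_2$ is identical with $B_1, B_2$ and the rotation $b$ in place of $A_1, A_2, a$.

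First I would unpack Lemma 5.6 into the data required by Definition 2.2. It exhibits a partition $P_1 = A_1 \cup A_2$ together with group elements $g_1 = a$ and $g_2 = e$ such that, writing $C_1 = a \cdot A_1$ and $C_2 = A_2$, the sets $C_1, C_2$ are disjoint and $C_1 \cup C_2 = \mathbb{F}_2$. Thus $g_i \cdot A_i = C_i$ is precisely an $\mathbb{F}_2$-equidecomposition realising $P_1 \sim_{\mathbb{F}_2} \mathbb{F}_2$.

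Next I would form the multiplied pieces $A_1 M, A_2 M$ and $C_1 M, C_2 M$ and verify the three clauses of Definition 2.2 for $P_1 M \sim_{SO_3} \mathbb{F}_2 M$. For the covering clause, the union identity of the preceding lemma gives $A_1 M \cup A_2 M = (A_1 \cup A_2) M = P_1 M$ and $C_1 M \cup C_2 M = (C_1 \cup C_2) M = \mathbb{F}_2 M$. For disjointness, its intersection identity gives $A_1 M \cap A_2 M = (A_1 \cap A_2) M = \emptyset$ and likewise $C_1 M \cap C_2 M = \emptyset$, since $A_1, A_2$ and $C_1, C_2$ are disjoint. For congruence, associativity of the action yields
\[
g_i \cdot (A_i M) = \{ (g_i f) \cdot m : f \in A_i,\, m \in M \} = (g_i A_i) M = C_i M,
\]
where $g_1 = a$ and $g_2 = e$ lie in $\mathbb{F}_2 \subseteq SO_3$. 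These three facts are exactly the definition of $P_1 M \sim_{SO_3} \mathbb{F}_2 M$.

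The only step carrying genuine content is the intersection identity $(S \cap T) M = SM \cap TM$ underlying disjointness. The inclusion $(S \cap T)M \subseteq SM \cap TM$ is formal, but the reverse inclusion requires that every point of $S^2 \setminus D$ have a \emph{unique} expression $f \cdot m$ with $f \in \mathbb{F}_2$ and $m \in M$: if $f_1 \cdot m_1 = f_2 \cdot m_2$ then $m_1, m_2$ lie in a common orbit, so $m_1 = m_2$ because $M$ meets each orbit once, whence Lemma 5.1 (freeness of the action off $D$) forces $f_1 = f_2$. This transversality-plus-freeness argument is exactly what the preceding set-multiplication lemma records, so I can cite it directly, noting only that although it is phrased for $P_1, P_2$ the identical reasoning applies to the pairs $A_1, A_2$ and $C_1, C_2$. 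I expect this uniqueness step to be the crux of the matter; everything else is bookkeeping against Definition 2.2.
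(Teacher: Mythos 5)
Your proof is correct and takes essentially the same approach as the paper: it transports the $\mathbb{F}_2$-equidecomposition $P_1 = A_1 \cup A_2$, $(a \cdot A_1) \cup A_2 = \mathbb{F}_2$ up to the sphere via $M$, using the identities $(S \cup T)M = SM \cup TM$ and $(S \cap T)M = SM \cap TM$ together with $\mathbb{F}_2 \subseteq SO_3$. If anything you are more careful than the paper, which never explicitly checks that the image pieces $a \cdot A_1M$ and $A_2M$ are disjoint, and which dismisses the intersection identity as ``similar working'' rather than isolating, as you do, the transversality-plus-freeness argument (uniqueness of the representation $f \cdot m$ via Lemma 5.1) that is its genuine content.
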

\begin{proof}
Using $A_1$, $A_2$, $B_1$, $B_2$ from the proof of Lemma 5.6, we know that $A_1 \cup A_2 = P_1$, $A_1 \cap A_2 = \emptyset$ and $a \cdot A_1 \cup A_2 = \mathbb{F}_2$. So\\
$A_1M \cup A_2M = (A_1 \cup A_2)M = P_1 M $\\
$A_1M \cap A_2M = (A_1 \cap A_2)M = \emptyset $\\
and, $a \cdot A_1M \cup A_2M = (a \cdot A_1 \cup A_2) M = \mathbb{F}_2 M$. Hence $P_1 M \sim_{SO_3} \mathbb{F}_2 M$. \\

We also know that $B_1 \cup B_2 = P_2$, $B_1 \cap B_2 = \emptyset$ and $b \cdot B_1 \cup B_2 = \mathbb{F}_2$. So \\
$B_1M \cup B_2M = (B_1 \cup B_2)M = P_2 M $\\
$B_1M \cap B_2M = (B_1 \cap B_2)M = \emptyset $\\
and similarly, $b \cdot B_1M \cup B_2M = (b \cdot B_1 \cup B_2) M = \mathbb{F}_2 M$. Hence $P_2 M \sim_{SO_3} \mathbb{F}_2 M$ \\
\end{proof}

We have used the fact that $P_1$ and $P_2$ are $\mathbb{F}_2$-equidecomposable to $\mathbb{F}_2$ to show that $P_1 M$ and $P_2 M$ are both $SO_3$-equidecomposable to $\mathbb{F}_2 M = S^2 \setminus D$. \\

\emph{Proof of the Banach-Tarski paradox with spheres continued:}
Define $S_1 := P_1 M \cup D$ and $S_2 := P_2 M$. We need to show that $S_1, S_2$ partition $S^2$.
\begin{align*}
S_1 \cup S_2 =& (P_1 M \cup D) \cup P_2 M \\
=& P_1 M \cup P_2 M \cup D  \\
=& \mathbb{F}_2 M \cup D \\
=& S^2 \setminus D \cup D \\
=& S^2
\end{align*}

\begin{align*}
S_1 \cap S_2 =& (P_1 M \cup D) \cap P_2 M \\
=& (P_1M \cap P_2M) \cup (D \cap P_2M) &\text{ (Lemma 5.8)}\\
=& \emptyset
\end{align*}

Now, $S_1 = P_1 M \cup D$, and we know from Lemma 5.9 that $P_1 M \sim \mathbb{F}_2 M$ and  $D \sim D$ is trivial to see. So Lemma 3.2 gives $(P_1 M \cup D) \sim (\mathbb{F}_2 M \cup D)$. We have
\begin{align*}
S_1 \sim& \mathbb{F}_2 M \cup D \\
=& S^2 \setminus D \cup D \\
=& S^2
\end{align*}
We also have
\begin{align*}
S_2 =& P_2 M \\
\sim& \mathbb{F}_2 M &\text{(Lemma 5.9)}\\
=& S^2 \setminus D \\
\sim& S^2 &\text{ (Lemma 4.3)}
\end{align*}
\end{proof}

\begin{thm}[Banach-Tarski paradox]
There exists subsets $\mathcal{B}_1, \mathcal{B}_2 \subseteq B^3$ that partition $B^3$ and $\mathcal{B}_1 \sim_{SO_3} B^3$, $\mathcal{B}_2 \sim_{SO_3} B^3$
\end{thm}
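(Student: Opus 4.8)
The plan is to lift the sphere-level paradox of Theorem 5.2 radially to the punctured ball, and then to patch the single missing point using the Banach-Schroder-Bernstein theorem. Write $B^3$ for the closed unit ball centred at the origin, and recall that every $x \in B^3 \setminus \{0\}$ is uniquely $x = r\xi$ with $0 < r \le 1$ and $\xi \in S^2$. For any set $T \subseteq S^2$ define its radial extension $\widehat{T} = \{ r\xi : 0 < r \le 1,\ \xi \in T \}$, noting that $\widehat{S^2} = B^3 \setminus \{0\}$.

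First I would record the key compatibility: since every $g \in SO_3$ is linear, fixes the origin, and preserves norms, we have $g \cdot (r\xi) = r(g \cdot \xi)$, so $g \cdot \widehat{T} = \widehat{g \cdot T}$ for every $T \subseteq S^2$. Radial extension also respects disjointness and unions, since $\widehat{A} \cap \widehat{B} = \widehat{A \cap B}$ and $\widehat{A} \cup \widehat{B} = \widehat{A \cup B}$. Consequently, taking the sets $S_1, S_2 \subseteq S^2$ from Theorem 5.2 (which partition $S^2$ with $S_1 \sim_{SO_3} S^2$ and $S_2 \sim_{SO_3} S^2$), their radial extensions $\widehat{S}_1, \widehat{S}_2$ partition $B^3 \setminus \{0\}$, and each witnessing equidecomposition of $S_i$ with $S^2$ lifts verbatim: applying the very same rotations $g_j$ to the radially extended pieces shows $\widehat{S}_i \sim_{SO_3} \widehat{S^2} = B^3 \setminus \{0\}$.

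Next I would pass from $B^3 \setminus \{0\}$ up to $B^3$. By Theorem 3.6 we have $B^3 \setminus \{0\} \sim_{SO_3} B^3$, so by transitivity of $\sim_{SO_3}$ (obtained by refining the two partitions involved and composing the corresponding rotations) we get $\widehat{S}_1 \sim_{SO_3} B^3$ and $\widehat{S}_2 \sim_{SO_3} B^3$. Now set $\mathcal{B}_1 := \widehat{S}_1 \cup \{0\}$ and $\mathcal{B}_2 := \widehat{S}_2$. Since $\widehat{S}_1, \widehat{S}_2$ partition $B^3 \setminus \{0\}$, adjoining the origin to the first piece makes $\mathcal{B}_1, \mathcal{B}_2$ a partition of $B^3$. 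For $\mathcal{B}_2$ nothing further is needed, as $\mathcal{B}_2 = \widehat{S}_2 \sim_{SO_3} B^3$. For $\mathcal{B}_1$ I would invoke the Banach-Schroder-Bernstein theorem (Theorem 3.3): on one hand $\mathcal{B}_1 \subseteq B^3$, so $\mathcal{B}_1$ is equidecomposable to a subset of $B^3$ (itself); on the other hand $B^3 \sim_{SO_3} \widehat{S}_1 \subseteq \mathcal{B}_1$, so $B^3$ is equidecomposable to a subset of $\mathcal{B}_1$. Theorem 3.3 then yields $\mathcal{B}_1 \sim_{SO_3} B^3$, completing the argument.

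I expect the main obstacle to be the radial extension step, specifically verifying that the pieces and rotations witnessing $S_i \sim_{SO_3} S^2$ genuinely lift to witnesses of $\widehat{S}_i \sim_{SO_3} B^3 \setminus \{0\}$. This hinges entirely on the identity $g \cdot (r\xi) = r(g \cdot \xi)$, that is, on rotations commuting with radial scaling; once that is in hand, disjointness, covering, and congruence of the extended pieces all follow formally. The leftover origin looks like the delicate point but is in fact the easy part, since Theorem 3.6 together with Banach-Schroder-Bernstein absorbs the single point with no explicit geometric construction.
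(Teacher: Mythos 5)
Your construction of the pieces coincides with the paper's: the paper likewise defines $\mathcal{B}_i = \bigcup_{0 < r \le 1} r S_i$ (your $\widehat{S}_i$), lifts the sphere paradox (Theorem 5.7 in the paper, which you cite as ``Theorem 5.2'') radially using exactly the fact that rotations commute with scaling, and then invokes Theorem 3.6 together with implicit transitivity of $\sim_{SO_3}$ to pass from $B^3 \setminus \{0\}$ to $B^3$. Where you genuinely depart from the paper is at the origin, and your version is the more careful one. The paper's pieces satisfy $\mathcal{B}_1 \cup \mathcal{B}_2 = B^3 \setminus \{0\}$, so they partition only the punctured ball rather than $B^3$ as the theorem statement demands; the origin is never assigned to either piece, and the proof glosses over this mismatch. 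You instead set $\mathcal{B}_1 = \widehat{S}_1 \cup \{0\}$, which really does partition $B^3$, and then absorb the extra point with the Banach-Schroder-Bernstein theorem (Theorem 3.3): $\mathcal{B}_1$ is trivially equidecomposable with a subset of $B^3$, while $B^3 \sim_{SO_3} \widehat{S}_1 \subseteq \mathcal{B}_1$ (using symmetry of $\sim$, immediate from the definition), so Theorem 3.3 gives $\mathcal{B}_1 \sim_{SO_3} B^3$. This costs one extra application of Theorem 3.3 but buys a proof of the literal statement; the paper's route, which applies Theorem 3.6 and transitivity to both pieces, is marginally shorter but establishes the partition claim only for $B^3 \setminus \{0\}$. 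Note also that both arguments rely on transitivity of $\sim_{SO_3}$, which the paper uses silently; your parenthetical sketch (common refinement of the two partitions, composing the corresponding rotations) is the right justification and would be worth writing out, since it is not proved anywhere in the paper.
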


\begin{proof}[Proof of the Banach-Tarski paradox]

There are 5 pieces of the sphere;  $A_1M$, $A_2M$, $B_1M$, $B_2M$, $D$. Remembering that these pieces are grouped accordingly so  $S_1 =  A_1M \cup A_2M \cup D$, $S_2 = B_1M \cup B_2M$. Fix $0 < r \le 1$. Define $rS_i = \{ rx | x \in S_i \}$ for $i = 1,2$. 

Define $\mathcal{B}_1 = \bigcup_{0 < r \le 1} rS_1$ and $\mathcal{B}_2 = \bigcup_{0 < r \le 1} rS_2$ so that $\mathcal{B}_1, \mathcal{B}_2 \subseteq B^3$. Since $S_1$ and $S_2$ are disjoint, then by construction $\mathcal{B}_1$ and $\mathcal{B}_2$ are disjoint. To see that $\mathcal{B}_1$ and $\mathcal{B}_2$ partition $B^3 \setminus \{0\}$ note that
\begin{align*}
\mathcal{B}_1 \cup \mathcal{B}_2 &= \bigcup_{0 < r \le 1} r(S_1 \cup S_2) \\
&=\bigcup_{0 < r \le 1} rS^2 \\
&=B^3 \setminus \{0\}
\end{align*}

The three pieces of the ball that partition $\mathcal{B}_1$ are $(\bigcup_{0 < r \le 1} rA_1M)$, $ (\bigcup_{0 < r \le 1} rA_2M)$ and $ (\bigcup_{0 < r \le 1} rD)$. They can be rearranged as follows; $(\bigcup_{0 < r \le 1} r(a \cdot A_1)M)$, $ (\bigcup_{0 < r \le 1} rA_2M)$ and $ (\bigcup_{0 < r \le 1} rD)$. It can be seen that these rearranged pieces partition $B^3 \setminus \{0\}$. Hence $\mathcal{B}_1 \sim_{SO_3} B^3 \setminus \{0\}$. Recall from Theorem 3.6 that $B^3 \setminus \{0\} \sim_{SO_3} B^3$, so $\mathcal{B}_1 \sim_{SO_3} B^3$.

Two pieces of the ball, $(\bigcup_{0 < r \le 1} rB_1M)$ and $ (\bigcup_{0 < r \le 1} rB_2M)$ partition $\mathcal{B}_2$. When the pieces are rearranged to form $(\bigcup_{0 < r \le 1} r(b \cdot B_1)M)$ and $ (\bigcup_{0 < r \le 1} rB_2M)$, then these pieces partition $B^3\setminus \{0\}$. Hence  $\mathcal{B}_1 \sim_{SO_3} B^3 \setminus \{0\} \sim_{SO_3} B^3$.
\end{proof}

\section*{Consequences of the Banach-Tarski Paradox}

The corresponding paradox for free groups, while somewhat surprising, is certainly not something that anyone would argue with.  When this paradox is applied to 3- dimensional space it does go against our intuition, but very often our intuition is flawed. 

By allowing the use of the Axiom of Choice, we can obtain sets which are so complicated that they cannot be assigned a measure consistent with the desired properties. This is precisely what happens in the Banach-Tarski paradox; most of the sets we have constructed are \emph{nonmeasurable};  not Lebesgue measurable. So the Banach-Tarski Paradox doesn't double the volume of the ball because the pieces in the decomposition cannot be assigned a volume - such sets would be far more intricate than the atomic structure of matter, and as such, could never be realised in real life. Such beauty exists only in the realm of mathematics.

\newpage
\section*{References}
[1] Stan Wagon. The Banach-Tarski Paradox. Cambridge University Press, 1985.

\end{document}